\documentclass[reqno]{amsart}
\usepackage{DocumentPreamble-arXiv}

\AddTitle{Are sparse graphs typically determined by their spectrum?}
 
\AddAuthor{Nils Van de Berg\textsuperscript{$\diamond$}}
\email{\href{mailto:n.p.v.d.berg@tue.nl}{n.p.v.d.berg@tue.nl}}
\address{$\diamond$ Eindhoven University of Technology, The Netherlands}
\AddAuthor{Alexander Van Werde\textsuperscript{$\ast$}}
\email{\href{mailto:a.van.werde@uni-muenster.de}{a.van.werde@uni-muenster.de}}
\address{$\ast$ University of Münster, Germany}

\AddKeyword{cospectral graph}
\AddKeyword{giant component}
\AddKeyword{switching method}
\AddKeyword{generalized cospectral}
\AddKeyword{$2$-core}

\AddMSC{05C50}
\AddMSC{05C80}
\AddMSC{15B52}
\AddMSC{60B20}
\begin{document}
\maketitle
\begin{abstract}
    We investigate whether it is typical for a sparse graph to be uniquely characterized by its adjacency spectrum up to isomorphism.
    Our first result shows that the giant component of an Erd\H{o}s--R\'enyi graph is cospectral  when the average degree is sufficiently small.  
    The proof relies on the existence of a specific pendant tree, combined with a method by Schwenk that swaps trees to construct a cospectral mate.  
    
    It seems possible that pendant trees are essentially the only obstruction, meaning that the giant should become characterized by spectrum with high probability if one prunes these by considering the $2$-core.   
    The majority of the paper is devoted to theoretical and numerical evidence supporting this concept.  
    Our main theorem in this direction establishes that local switching methods can not cause the $2$-core to be cospectral.
    We also discuss $\mathbb{R}$-cospectrality and rational cospectrality at fixed level. 
\end{abstract}

\section{Introduction}
A fundamental problem in spectral graph theory is whether the eigenvalues of the adjacency matrix characterize a given graph up to isomorphism.
This is known for some special classes like cycles or disjoint unions of complete graphs; see \eg \cite{van2003graphs}. 
Conversely, there are also examples of non-isomorphic graphs with equal spectra, called \emph{cospectral mates}. 

One may wonder if such examples are typical or exceptional. 
A striking result by Schwenk from 1973 established that almost all trees admit a cospectral mate \cite{schwenk1973almost}, thus showing that cospectrality is typical for trees.
This naturally also raised the question what might be true for generic graphs, which turned out to be a challenging problem. 
For a long time, even the expected direction of the answer was unclear. 
For instance, Godsil wrote in his 1993 book that ``It is still an open question whether almost all graphs are characterized by their characteristic polynomial. It is not even clear if we should seek to prove this, or disprove this.'' \cite[p73]{godsil1993algebraic}.

Evidence on the direction of the answer came from an exhaustive search through graphs on $n\leq 11$ vertices by Haemers and Spence \cite{haemers2004enumeration} which suggested that a large fraction of cospectral graphs arose from a specific local construction of cospectral graphs known as \emph{Godsil--McKay switching} \cite{godsil1982constructing}. 
That specific construction can be proved to only produce an asymptotically negligible fraction of graphs, leading to the belief that the opposite of Schwenk's result may happen. 
\emph{Haemers' conjecture} asserts that almost all graphs are characterized by their spectrum \cite{van2003graphs,haemers2016almost}. 
A proof remains elusive, but further evidence has recently been found by Wang and Wang, who estimated via Monte Carlo sampling that at least $99\%$ of graphs on $50$ vertices are characterized if the spectrum of the complement graph is also given \cite{wang2025haemers}.

\pagebreak[4]
In probabilistic language, Haemers' conjecture means that an Erd\H{o}s--R\'enyi graph from the $G(n,p)$ model with edge probability $p = 1/2$ is characterized by its spectrum with probability tending to one as $n\to \infty$.
Such a graph is rather dense, as the average degree is linear in $n$. 
Thus, Haemers' conjecture can be interpreted as stating that \emph{dense} graphs are typically characterized by their spectrum. 
The present paper aims to determine what should be typical in sparse settings, which we do by considering the giant component in a regime where $p= \lambda/n$ for $1 < \lambda \ll n$.

We find in \Cref{thm: GiantCospectral} that the giant is cospectral if the average degree $\lambda$ is sufficiently small.
This also implies that the full graph is cospectral for small $\lambda$, as a disconnected graph is cospectral whenever any of its components is so.
The proof of the giant's cospectrality relies on specific pendant trees as an obstruction to spectral characterization. 
The majority of this paper then investigates what is typical for sparse graphs without pendant trees, which we do by considering the $2$-core of the giant component. 
Our main result is Theorem~\ref{thm: NoSwitching} and shows that every possible local switching method fails on the $2$-core. 
In particular, this rules out Godsil--McKay switching of bounded size but the claim is substantially more general than only that particular method.    
We discuss additional theoretical evidence for the resulting conjecture that the $2$-core should be characterized by its spectrum in \Cref{sec: EvidenceOpen}. 
(Numerical evidence is given in \Cref{sec: NumericalEvidence}.)  
The structure of the remainder of the paper is outlined in \Cref{sec: Outline}.   

\subsection{The giant is cospectral}\label{sec: GiantIsCospectral}
Recall that two non-isomorphic graphs are \emph{cospectral} if they have the same adjacency spectrum. 
Two cospectral graphs are called \emph{$\bbR$-cospectral} if the complement graphs are also cospectral.
This generalization dates back to the 1980 work of Johnson and Newman \cite{johnson1980note} which established various equivalent formulations.
Aside from its independent interest, the relevance for the current paper is that this notion will enable numerical investigations.  

\begin{theorem}\label{thm: GiantCospectral}
    Consider a $G(n,p)$ random graph with $p = \lambda/n$ where $\lambda = \lambda_n >1$ remains bounded away from one as $n\to \infty$. 
    Further, consider an arbitrary sequence $h(n)$ with $h(n)\to \infty$. 
    Then,  
    \begin{enumerate}[leftmargin = 2em]
        \item\label{item: Cospectral} The largest component $C_{\textnormal{giant}}$ admits a cospectral mate with probability tending to one if 
        \begin{equation} 
            \lambda \leq  \ln(n)/8 +  \ln(\ln(n)) - h(n).\label{eq: Cospectral_lambda_condition} 
        \end{equation}
        \item\label{item: R-Cospectral} The largest component $C_{\textnormal{giant}}$ admits a $\bbR$-cospectral mate with probability tending to one if 
        \begin{equation} 
            \lambda \leq\ln(n)/10 +  \ln(\ln(n)) - h(n). \label{eq: R-Cospectral_lambda_condition}
        \end{equation}
    \end{enumerate}
\end{theorem}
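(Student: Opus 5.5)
We use Schwenk's tree-swapping construction. Recall Schwenk's observation: there are two rooted trees $(T_1,r_1)$ and $(T_2,r_2)$, on $9$ vertices in his original example, such that for every graph $H$ and vertex $v\in H$, the graphs obtained by identifying $v$ with $r_1$, respectively with $r_2$, are cospectral. Equivalently, $T_1$ and $T_2$ have the same characteristic polynomial, and so do $T_1-r_1$ and $T_2-r_2$. The coalescence formula $\phi(H\cdot T)=\phi(H)\phi(T-r)+\phi(H-v)\phi(T)-x\,\phi(H-v)\phi(T-r)$ then gives cospectrality. For $\bbR$-cospectrality we additionally need the complements to be cospectral. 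By the Johnson--Newman characterization this holds if, in addition, the walk generating functions $\mathbf{1}^\top (xI-A)^{-1}\mathbf{1}$ agree. The plan is to pick a pair of rooted trees for which this extra invariant also matches under coalescence. This requires a larger pair, presumably on $11$ vertices, which explains the constant $1/10$ in place of $1/8$.

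Concretely, write $k$ for the number of non-root vertices of the pendant tree ($k=8$ for the cospectral pair, $k=10$ for the $\bbR$-cospectral pair). Call a copy of $T_1$ \emph{pendant} at a vertex $v$ of $C_{\mathrm{giant}}$ if $v$ is identified with $r_1$, the remaining $k$ vertices induce $T_1-r_1$, and they have no other neighbours in $G$. Replacing such a copy by $T_2$ produces a graph $C'$ cospectral ($\bbR$-cospectral) with $C_{\mathrm{giant}}$. We must also ensure $C'\not\cong C_{\mathrm{giant}}$. We get this by choosing a tree pair where $T_2$ has a feature absent from $C_{\mathrm{giant}}$, for instance a degree profile of pendant subtrees, or by a counting argument on pendant trees: with high probability $C_{\mathrm{giant}}$ has few pendant copies of $T_2$, each at distinct locations, so the swap changes the multiset of isomorphism types of maximal pendant trees.

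The main step is a second moment computation showing that with high probability the giant contains such a pendant copy. Let $X$ count the pendant copies of $T_1$ hanging off the giant. Ignoring the giant condition for a moment, the first moment is
\[
 \mathbb{E}X \asymp n^{k+1}\,p^{k}\,(1-p)^{kn}\cdot \Theta(1)\cdot \theta(\lambda),
\]
where $\theta(\lambda)\to1$ is the probability that the root lies in the giant. This is of order $n\lambda^{k}e^{-k\lambda}$. Hence $\mathbb{E}X\to\infty$ exactly when $k\lambda - k\ln\lambda\le \ln n - \omega(1)$, that is, $\lambda\le \ln(n)/k+\ln\ln n - h(n)$ after absorbing constants. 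This matches \eqref{eq: Cospectral_lambda_condition} with $k=8$ and \eqref{eq: R-Cospectral_lambda_condition} with $k=10$. Here $\ln\lambda\approx\ln\ln n$, since $\lambda=\Theta(\ln n)$ in the critical range.

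For concentration we compute $\mathbb{E}[X^2]$. Pairs of copies with disjoint vertex sets contribute $(1+o(1))(\mathbb{E}X)^2$, because the correction factor $(1-p)^{-k^2}$ is $1+o(1)$. Overlapping pairs either share only the root or are identical. Since the non-root parts have no outside edges, partial overlaps are impossible, and these pairs contribute $O(\mathbb{E}X)$ plus lower-order terms. Chebyshev then gives $X>0$ with high probability.

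The subtle point, and the expected main obstacle, is handling the event that the root lies in the giant uniformly in $\lambda$ ranging from constants above $1$ up to order $\ln n$. When $\lambda$ is bounded we instead use a local weak convergence, Poisson Galton--Watson argument, conditioning on the root's remaining exploration after removing the $k$ pendant vertices. Alternatively we split into the bounded and growing regimes, noting that for $\lambda\to\infty$ almost all vertices are in the giant.
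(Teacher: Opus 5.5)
Your plan is the paper's proof. You count pendant copies, rooted in $C_{\textnormal{giant}}$, of Schwenk's $9$-vertex rooted tree, respectively of Godsil and McKay's $11$-vertex pair; that pair's existence has to be cited, since your plan gives no way to produce it. You get $\bbE X\asymp n\lambda^{k}e^{-k\lambda}$, which tends to infinity exactly under the stated bounds, and finish with Chebyshev. The first moment is correct.

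The step that is false as stated is the overlap analysis. Two overlapping pendant copies need not either share only the root or coincide. Copies with different roots indeed cannot meet in a connected graph with at least $2(k+1)$ vertices; the paper proves this as a separate lemma, and ``no outside edges'' alone is not the whole argument. But copies with the same root $v$ are each a union of components of $G-v$, and they can share a proper nonempty part. For instance, suppose the root of $T_1$ has two branches, isomorphic to rooted trees $B_1$ and $B_2$, and $v$ carries three pendant branches $B_1,B_2,B_3$ with $B_3\cong B_1$. Then $\{v\}\cup B_1\cup B_2$ and $\{v\}\cup B_2\cup B_3$ are both pendant copies, and they meet exactly in $\{v\}\cup B_2$. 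The repair is what the paper does. If two copies share the root and $i$ non-root vertices, $0\le i\le k$, their union is a pendant tree with $2k-i$ non-root vertices. These pairs therefore contribute $O(n\lambda^{2k-i}e^{-(2k-i)\lambda})$, which is a fraction $O(n^{-1}(e^{\lambda}/\lambda)^{i})$ of $(\bbE X)^2$. This fraction is increasing in $i$ and is of order $1/\bbE X$ at $i=k$, so every overlap term is $o((\bbE X)^2)$ under your hypothesis.

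Two smaller points. The first concerns non-isomorphism of the swapped graph, which the paper takes for granted. Your claim that $C_{\textnormal{giant}}$ has few pendant copies of $T_2$ is false: $T_2$ has as many vertices as $T_1$, so the same first-moment computation makes its copies equally numerous (linearly many for fixed $\lambda$). The claim is also unnecessary. With high probability the giant has a nonempty $2$-core, and every vertex outside it lies in a rooted tree $R_c$ hanging at a unique core vertex $c$. The multiset of rooted isomorphism types of the $R_c$ is an isomorphism invariant. The swap changes exactly one $R_c$, and it changes its type. To see this, compare multisets of root branches and induct along the path from $c$ to the root of the copy, using that $(T_1,r_1)\not\cong(T_2,r_2)$ as rooted trees.

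The second concerns the giant-membership factor. Your statement $\theta(\lambda)\to1$ holds only when $\lambda\to\infty$, and you need neither local weak convergence nor a split into regimes. Conditionally on the pendant event(s), the graph induced on the remaining vertices is again Erd\H{o}s--R\'enyi with the same $p$. By exchangeability and concentration of the giant's size, the root lies in the giant with probability $(1+o(1))c_\lambda$, and both roots do with probability $(1+o(1))c_\lambda^2$, where $c_\lambda=\bbE[\#C_{\textnormal{giant}}/n]$. This holds uniformly over the allowed range of $\lambda$, and it is how the paper argues.
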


The proof is given in \Cref{sec: ProofGiantCospectral}.
The idea is visualized in \Cref{fig: GiantZoom} and is as follows.
It is known by Schwenk \cite{schwenk1973almost} that there exist rooted trees $T_1,T_2$ on $9$ nodes such that for any graph $G$, the graph found by joining $T_1$ to $G$ at a vertex is cospectral to the graph that is found if one instead joins with $T_2$ at that vertex.
Hence, any graph with a pendant copy of $T_1$ has a cospectral mate. 
The main content in the proof of  \Cref{thm: GiantCospectral} is to determine for what range of $\lambda$ a pendant copy of $T_1$ exists in the giant, which is where the bound in \eqref{eq: Cospectral_lambda_condition} arises. 
The condition \eqref{eq: R-Cospectral_lambda_condition} follows similarly using trees with $11$ nodes that yield $\bbR$-cospectrality, due to Godsil and McKay \cite{godsil1976some}.

\begin{figure}[hb]
    \centering
    \includegraphics[width=0.94\linewidth]{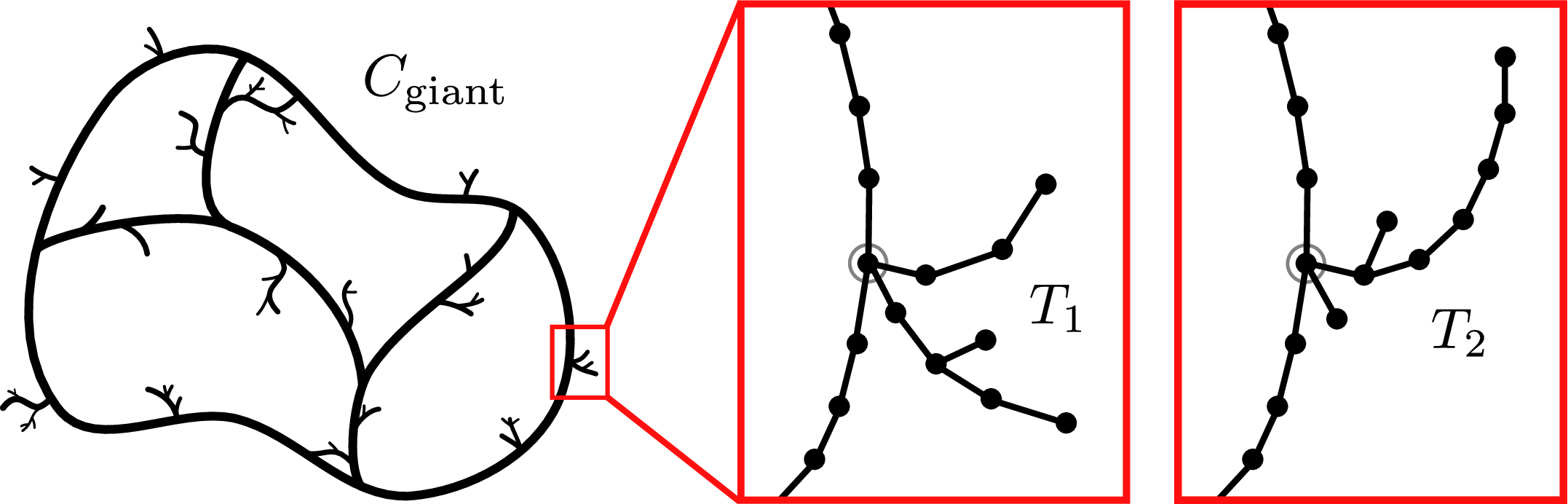}
    \caption{
    The giant can be decomposed as a skeleton consisting of cycles with pendant trees attached to some nodes.
    (See \cite{ding2014anatomy} for precise results of this nature.)
    To show that it is cospectral, it suffices to find a specific rooted tree $T_1$ among these pendant trees. 
    A non-isomorphic cospectral graph is then found by replacing $T_1$ with a different rooted tree $T_2$. 
    }
    \label{fig: GiantZoom}
\end{figure}

Swapping pendant trees can also be used to derive lower bounds on the number of cospectral mates. 
The following result illustrates this principle by showing that there are exponentially many cospectral mates when the average degree is fixed:  
\begin{proposition}\label{prop: ExponentiallyManyCospectral}
    For every fixed $\lambda >1$ there exists $c>0$ such that $C_{\textnormal{giant}}$ has at least $\exp(cn)$ non-isomorphic $\bbR$-cospectral mates with probability tending to one. 
\end{proposition}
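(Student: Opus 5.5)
The plan is to show that, for fixed $\lambda>1$, the giant contains linearly many vertex-disjoint pendant copies of the Godsil--McKay rooted tree $T_1$ on $11$ vertices. We then swap any subset of them for $T_2$ and count the isomorphism classes that result.

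\emph{Step 1 (many pendant copies).} Call a vertex $v$ of $C_{\textnormal{giant}}$ \emph{good} if three things hold. First, $v$ has exactly one neighbour $u$ outside a set $S_v$ with $|S_v|=11$. Second, $G[S_v]$ is isomorphic to $T_1$ rooted at $v$. Third, no vertex of $S_v\setminus\{v\}$ has neighbours outside $S_v$. Then the pendant tree $S_v$ is attached to $u$ through the edge $uv$. (Alternatively we root $T_1$ at the attachment vertex directly, following the construction used for \Cref{thm: GiantCospectral}.) The probability that a fixed $11$-set induces such a structure, has exactly the one prescribed outside edge, and that this edge leads into the giant, is of order $n^{-10}\cdot p^{10}\cdot(1-p)^{11n}\cdot \lambda\cdot\Theta(1)=\Theta(n^{-1})\cdot \lambda^{11}e^{-11\lambda}\cdot\Theta(1)$. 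Summing over the $\Theta(n^{11})$ choices of $S_v$ with a labelled copy of $T_1$ gives $\mathbb{E}[N]=\Theta(n)$, where $N$ is the number of good copies. Membership in the giant is handled via the standard fact that $u$ lies in the giant with probability tending to $\rho(\lambda)>0$, even after conditioning on the small local structure. For concentration, the second moment (or a bounded differences/Chebyshev argument) works: distinct good copies are almost independent, so $\mathrm{Var}(N)=o(\mathbb{E}[N]^2)$. Hence $N\ge c_1 n$ with high probability. Good copies are automatically vertex-disjoint, because each one is a full pendant component hanging off a single edge.

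\emph{Step 2 (swapping preserves $\mathbb{R}$-cospectrality).} By the Godsil--McKay result, replacing one pendant $T_1$ by $T_2$ at its root gives an $\mathbb{R}$-cospectral graph, and this holds for every host graph. Iterating, for each subset $A$ of the $N$ copies, swapping exactly the copies in $A$ gives a graph $G_A$ that is $\mathbb{R}$-cospectral to $C_{\textnormal{giant}}$. Each step changes one pendant tree, and the property is transitive and holds for arbitrary host graphs.

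\emph{Step 3 (counting isomorphism classes).} The main obstacle is that different subsets may give isomorphic graphs. We first try the crude bound that the isomorphism class of $G_A$ determines $|A|$. Writing $t_i$ for the number of pendant copies of $T_i$ (with root adjacent to a vertex of degree at least $2$ outside), we have $t_2(G_A)=t_2(G)+|A|$, provided $T_1$ and $T_2$ are not isomorphic as pendant rooted trees. Counting pendant copies of $T_2$ is an isomorphism invariant, so $|A|\mapsto[G_A]$ is injective. However, this only yields $N+1$ classes, which is linear rather than exponential.

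To get exponentially many, we use the fact that each isomorphism class contains at most $|\mathrm{Aut}|$-many labelled versions. The number of subsets $A$ mapping to the same class is at most the number of bijections $G_A\to G_{A'}$ fixing the skeleton. Any isomorphism $G_A\cong G_{A'}$ maps the $2$-core of the giant to itself. So the classes number at least $2^N/(\#\text{automorphisms of the remaining structure}\times\text{local choices})$.

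Rather than bounding automorphisms directly, I would refine Step 1 to create distinguishable attachment sites. For a small constant $r$, consider pendant copies of $T_1$ attached at vertices $u$ whose radius-$r$ neighbourhood, with $T_1$ deleted, is a rooted tree whose isomorphism type is unique in the graph. Unique neighbourhood types are too restrictive with constant $r$. A better route is to let $u$ range over linearly many vertices of the $2$-core. Whp the $2$-core's kernel has only $\exp(o(n))$ automorphisms (Ding--Lubetzky--Peres anatomy: the kernel is a random multigraph with linear size, and the attached trees are i.i.d.). Then $\#\{\text{classes}\}\ge 2^N/|\mathrm{Aut}(\text{host})|\ge 2^{c_1 n}/\exp(o(n))$. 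Bounding $|\mathrm{Aut}|$ of the giant with $T_1$-copies removed by $\exp(o(n))$ is the step I expect to be hardest. I would handle it via the Ding--Lubetzky--Peres contiguity model, where automorphisms of a random kernel are trivial whp and tree automorphisms contribute $\exp(o(n))$ if we restrict to copies whose host vertices carry no other symmetric pendant trees. If that proves too delicate, the fallback is to restrict to $N'\ge c_2 n$ copies hanging on distinct kernel-path vertices and argue directly that any isomorphism must preserve the cyclic order of these along kernel paths.
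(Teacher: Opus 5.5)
\emph{Verdict: genuine gap, in Step~3.} Your skeleton matches the paper's: linearly many pendant $T_1$'s, swap arbitrary subsets, then control how many subsets give isomorphic graphs. But Step~3 fails as written.

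You propose to bound the number of classes by $2^N/|\mathrm{Aut}(\text{host})|$, with the host being the giant minus the $T_1$-copies, and to show this group has size $\exp(o(n))$. That is false. For fixed $\lambda>1$ the giant has, with high probability, linearly many vertices carrying two pendant leaves (or two isomorphic pendant trees of any fixed shape). Each of these gives an independent involution, so the group has size at least $2^{c'n}$. Here $c'$ is typically far larger than your $c_1$, since a pendant $11$-vertex tree is much rarer than a cherry. Restricting which vertices carry the swapped copies does not remove these symmetries, because they live elsewhere in the graph.

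Your claim that the kernel's automorphisms are trivial with high probability is also incorrect. Loops and parallel edges in the kernel (cycles hanging at a single core vertex, or parallel equal-length paths in the $2$-core) occur with probability bounded away from zero and give nontrivial automorphisms. Only boundedness holds.

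The missing idea is that pendant-tree symmetries are irrelevant: only $\mathrm{Aut}(K)$ matters, where $K=\operatorname{Core}_2(C_{\textnormal{giant}})$.
\begin{enumerate}
\item The copies must hang at vertices of $K$. Your Step~1 only puts $u$ in the giant, so you need the analogous count with $u\in K$, or Ding--Lubetzky--Peres as in the paper. It is cleanest to swap all $T_1$-copies at a given core vertex together.
\item Any isomorphism $\phi\colon G_A\to G_{A'}$ then maps $K$ onto itself. The restriction $\sigma=\phi|_K$ sends the pendant tree at $u$ in $G_A$ to the pendant tree at $\sigma(u)$ in $G_{A'}$.
\item Hence $\sigma$ and $A$ determine $A'$, so each isomorphism class contains at most $|\mathrm{Aut}(K)|$ subsets.
\end{enumerate}

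You still need an actual bound on $\mathrm{Aut}(K)$. The paper takes this from Verbitsky and Zhukovskii: the number of core vertices with a nontrivial orbit is bounded in probability. The paper then discards those boundedly many vertices. Every remaining host vertex is fixed by $\sigma$, so all $G_S$ with $S\subseteq\mathcal{V}_{T_1}\setminus\mathcal{V}_{\textnormal{aut}}$ are pairwise non-isomorphic. Your orbit-counting variant would also work with this input, since $|\mathrm{Aut}(K)|\le m!$ when only $m$ core vertices have nontrivial orbits.

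A minor point: the displayed probability in your Step~1 is garbled. It should be $\Theta(n^{-10})$ per labelled $11$-set, though your conclusion that the expected number of good copies is $\Theta(n)$ is correct.
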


Here, and in all subsequent results, it should be understood that we use the same notation as in \Cref{thm: GiantCospectral}. 
That is, $C_{\textnormal{giant}}$ refers to the largest component in a $G(n,p)$ graph with $p=\lambda/n$. 

The main point that requires some care in the proof of \Cref{prop: ExponentiallyManyCospectral} is that automorphisms of the graph could result in different swaps of pendant trees giving isomorphic graphs. 
However, this is ultimately insubstantial because the number of vertices with a nontrivial orbit can be controlled using a result by Verbitsky and Zhukovskii \cite{verbitsky2024canonical}.
Details can be found in \Cref{sec: ProofGiantCospectral}.

\subsection{Are pendant trees the only obstruction?}
Theorem \ref{thm: GiantCospectral} and Proposition \ref{prop: ExponentiallyManyCospectral} show that sparse graphs are typically cospectral because they contain pendant trees. 
We would like to propose that this might be the only reason for cospectrality in this setting. 
One precise way to phrase this is to consider the \emph{$2$-core}, denoted $\operatorname{Core}_2(C_{\textnormal{giant}})$, which is the graph found by removing all pendant trees. 
More generally, the \emph{$k$-core} for $k\geq 2$ is the subgraph that remains when one iteratively removes all vertices of degree $<k$.  
\begin{conjecture}\label{conj: 2Core}
    For every $1 <\lambda \leq n/2$ that remains bounded away from one as $n\to \infty$, it holds that the $2$-core of the giant is characterized by its spectrum with probability tending to one. 
\end{conjecture}
A closely related philosophy that local structures should be the typical obstruction for spectral properties has been crucial in the study of the rank of discrete random matrices \cite{tikhomirov2020singularity,costello2008rank,costello2010rank,addario2014hitting}.
Particularly closely related is work of Feber, Glasgow, Kwan, Sah, and Sawhney \cite{ferber2023singularity,glasgow2025exact}, who investigated the $k$-core of an Erd\H{o}s--R\'enyi graph in the regime with constant average degree.
The $2$-core is an edge case in that theory as local dependencies for the rank then still occur with nonzero probability, while the $k$-core with $k\geq 3$ ensures that all local dependencies are removed. 
Correspondingly, it was shown in \cite{ferber2023singularity} that the adjacency matrix of the $k$-core is nonsingular for $k\geq 3$, while \cite{glasgow2025exact} showed that the $2$-core is singular with probability strictly between $0$ and $1$.

Given that the $2$-core still allows local obstructions causing singularity with nonvanishing probability, one may wonder whether a similar issue applies for cospectrality.
Aside from swapping pending trees, other methods to produce cospectral graphs that rely on switching connections on a small subset of nodes are indeed known.
As mentioned in the introduction,  Godsil--McKay switching \cite{godsil1982constructing} is believed to account for a large fraction of cospectral graphs in the dense setting of Haemers' conjecture. 
Further, other examples such as \emph{Abiad--Haemers switching} \cite{abiad2012cospectral} or \emph{Wang--Qiu--Hu switching} \cite{wang2019cospectral} are also known. 
The number of cospectral graphs that can be produced using such methods was recently analysed by Abiad, Van de Berg, and Simoens \cite{abiad2025counting}. Based on their unifying framework we consider the following notion of local switching:
\begin{definition}\label{def: SwitchingOfSizeM}
    Fix an integer $m\geq 1$ and a graph $G$ with $n\geq m$ vertices. 
    Then, a graph $H$ is said to be obtained by a \emph{switching of size $\leq m$} if there exists an ordering of the vertices of $G$ and $H$ such that their adjacency matrices satisfy
    $
        A_H = (Q \oplus I_{n-m})^TA_G (Q\oplus I_{n-m})
    $
    for some $m\times m$ orthogonal matrix $Q$ that is not a permutation. 
    Here, $I_{n-m}$ is the identity matrix.
\end{definition}

\begin{example}[Godsil--McKay switching {\cite{godsil1982constructing}}]\label{ex: GosilMcKay}
    Suppose that $m=2k$ is an even integer. 
    Assume that $G$ admits a set of vertices $X = \{x_1,\ldots,x_m \}$ such that the induced subgraph $G[X]$ is regular and such that every $v\not\in X$ has either $0,k$ or $2k$ neighbors in $X$.
    Then, a graph cospectral to $G$ can be found by swapping the adjacency to $X$ for every $v\not\in X$ with exactly $k$ neighbors in $X$, thus making it adjacent to only the other $k$ vertices in $X$. 
    The associated orthogonal matrix is 
    \begin{equation}
        Q =
        \frac{1}{k}\begin{pmatrix}
            J_k - kI_k &  J_k \\ 
            J_k & J_k - kI_k 
        \end{pmatrix}, 
    \end{equation}
    where $J_k$ is the $k\times k$ all-ones matrix. 
    Let us note that it can sometimes occur that the graph constructed by the switching is isomorphic to $G$. 
\end{example}
\pagebreak[3]
\begin{example}[Swapping pending graphs]\label{ex: Pending} 
    Consider cospectral graphs $H_1,H_2$ on $m+1$ vertices for which there exist $x_1 \in H_1$ and $x_2 \in H_2$ such that $H_1\setminus \{x_1 \}$ is also cospectral to $H_2 \setminus \{x_2 \}$.
    Then, for any graph $G$, the graph $G\cdot H_1$ found adding an $x_1$-rooted copy of $H_1$ at some vertex is cospectral to the graph $G \cdot H_2$ found by instead adding $H_2$ at that vertex; see \eg \cite[p.59 \& p.65]{godsil1993algebraic}.   
    In fact, a result by Farrugia  \cite[Theorem 10]{farrugia2019overgraphs} yields an $m\times m$ orthogonal matrix $Q$ with
    \begin{equation}
        Q^{\T} A_{1} Q = A_{2} \quad \textnormal{ and }Q^{\T} v_1 = v_2, 
    \end{equation}
    where $A_i$ is the adjacency matrix of $H_i \setminus \{x_i \}$ and $v_i\in \{0,1 \}^{H_i \setminus \{x_i \}}$  is the indicator vector of the neighbors of $x_i$ in $H_i$.  
    It follows that the cospectrality of $G\cdot H_1$ and $G\cdot H_2$ can be realized by a switching method of size $\leq m$ applied to $G\cdot H_1$.
    In particular, this allows interpreting the swaps of specific pendant trees from \cite{schwenk1973almost,godsil1976some} used in \Cref{sec: GiantIsCospectral} as a switching method. 
\end{example}

Note that two symmetric matrices share the same spectrum if and only if they can be transformed into each other by an orthogonal matrix. 
It follows that two graphs are cospectral if and only if there exists a switching that relates them, but the size of the switching can potentially be arbitrarily large.
A switching of size $m$ will however only modify $G$ on a subgraph of size $m$ and on the edges connecting that subgraph to its complement.
The size of the switching hence gives a way to quantify whether cospectrality occurs for a local reason or if it necessitates a global conspiracy in the large-scale graph structure.
The following \Cref{thm: NoSwitching} shows that such local obstructions do not produce cospectrality in the $2$-core,  thus giving evidence for \Cref{conj: 2Core}.
\begin{theorem}\label{thm: NoSwitching} 
    Suppose that $\lambda>1$ remains bounded away from one as $n\to \infty$. 
    Fix some integer $m\geq 1$ and assume that $\lambda = o(n^{1/(2m+2)})$. 
    Then, with probability tending to one, every graph $H$ obtainable from the giant's $2$-core by a switching of size at most $m$ is isomorphic to it.
    That is,
    \begin{equation}
        \lim_{n\to \infty}\bbP\bigl(\operatorname{Core}_2(C_{\textnormal{giant}})\textnormal{ admits a switching of size } \leq m \textnormal{ that yields a non-isomorphic graph}  \bigr) =0.\nonumber
    \end{equation}
\end{theorem}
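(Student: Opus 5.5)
The plan is to reduce the theorem to a purely combinatorial local statement about the $2$-core, and then show via a first moment computation that the relevant local configurations do not occur.

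\emph{Deterministic reduction.} Suppose $A_H = (Q\oplus I_{n-m})^{\T} A_G (Q\oplus I_{n-m})$, where $G=\operatorname{Core}_2(C_{\textnormal{giant}})$, the set $X$ consists of the first $m$ vertices, and $Q$ is orthogonal but not a permutation. Write $A_G$ in block form with blocks $A_{XX}$, $B$ (the $X\times X^c$ block) and $C$. Then
\[
A_H = \begin{pmatrix} Q^{\T} A_{XX} Q & Q^{\T} B \\ B^{\T} Q & C\end{pmatrix},
\]
so $H$ agrees with $G$ outside $X$. The columns $b_v$ of $B$, for $v\notin X$, are $0/1$ vectors, and they must be mapped by $Q^{\T}$ to $0/1$ vectors. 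Orthogonality preserves norms and inner products, so $Q^{\T}$ maps the set $\mathcal{B}=\{b_v\}$ isometrically into $\{0,1\}^m$. It also conjugates $A_{XX}$ into a $0/1$ matrix.

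The key claim is as follows. Let $W=\operatorname{span}\mathcal{B}$, and suppose $W$ contains a spanning family of indicator vectors whose supports partition $X$ into singletons, e.g.\ $\mathcal{B}$ contains every standard basis vector $e_x$. Then an orthogonal map sending $0/1$ vectors to $0/1$ vectors of equal norm and equal pairwise overlaps must permute the $e_x$. Combined with the constraint on $A_{XX}$, this forces $H\cong G$.

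More generally, I would show that if $H\not\cong G$, then $X$ (or a subset of it) has a degenerate neighbourhood structure. Concretely, there would exist two distinct vertices $x\neq y$ in $X$ that $Q$ genuinely mixes, so neither $e_x$ nor $e_y$ lies in $\mathcal{B}$ up to the permutation part. Hence every external vertex adjacent to $x$ or $y$ must have at least $2$ neighbours in $X$. I would make this precise by decomposing $\mathbb{R}^X$ into the blocks on which $Q$ acts as a permutation versus non-trivially. On a non-trivial block $Y$ with $|Y|\ge 2$, no external vertex has exactly one neighbour in $Y$ unless that structure is preserved trivially. Since $G$ is a $2$-core, every vertex of $Y$ has degree at least $2$. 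The upshot is that $G$ contains a connected set $S\supseteq Y$ of at most some $f(m)$ vertices, roughly at most $2m$ counting the external vertices with at least $2$ neighbours in $Y$, that spans at least $|S|+1$ edges. That is, $S$ is a small subgraph with excess at least one (two cycles close together). The alternative degenerate case is that $Y$ lies in a long induced path (degree-$2$ vertices) whose external neighbours each touch $Y$ once. I would argue that a switching supported on a path segment attached only through its endpoints acts as a path automorphism, i.e.\ a reflection, yielding an isomorphic graph.

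\emph{Probabilistic step.} Next I would show that with high probability $G(n,\lambda/n)$ contains no connected subgraph on $s\le 2m+1$ vertices with at least $s+1$ edges. The expected number of such subgraphs is at most $\sum_{s\le 2m+1} n^s s^{2s} (\lambda/n)^{s+1} = O(\lambda^{2m+2}/n)$, which is $o(1)$ precisely under the hypothesis $\lambda=o(n^{1/(2m+2)})$. This matching of exponents suggests that the intended bad configuration has exactly $2m+1$ vertices and one excess edge, which is a useful sanity check for calibrating the deterministic lemma.

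\emph{Main obstacle.} The hard part is the deterministic lemma: classifying all non-permutation orthogonal $Q$ compatible with a locally tree-like (unicyclic-at-most) neighbourhood in a graph of minimum degree at least $2$. I would first try induction on $m$, decomposing $Q$ into irreducible blocks. I would then use that a unicyclic or tree neighbourhood forces many external vectors $b_v$ to be standard basis vectors, which rigidifies $Q$. The residual cases are degree-$2$ paths and a single cycle, which I would handle by hand by showing that $Q$ then induces a graph automorphism.
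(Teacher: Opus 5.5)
\textbf{Verdict: genuine gap.} The gap is exactly in the case you defer to ``by hand''.

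\textbf{The missing dichotomy.} Once $Q$ has no integral rows, every vertex of $N(X)$ has at least two neighbours in $X$. Hence the neighbourhood graph $G_X$ (all edges touching $X$) has minimum degree $2$. The right dichotomy is then:
\begin{enumerate}
\item some vertex of $G_X$ has degree $\geq 3$, which yields the small subgraph with excess (using that $N(X)$ is independent in $G_X$ to get at most $2k+1\le 2m+1$ vertices); or
\item $G_X$ is a disjoint union of cycles, formed by path components of $G[X]$ joined through vertices of $N(X)$ that each have exactly two neighbours in $X$.
\end{enumerate}
Your ``alternative degenerate case'' (a path whose external neighbours touch $Y$ once) is not case 2; the no-integral-rows property already excludes it.

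\textbf{Why your plan for case 2 fails.} In this case no column $b_v$ is a standard basis vector, so nothing rigidifies $Q$, and $Q$ need not induce an automorphism or a reflection. For example, let $X=\{x_1,\dots,x_4\}$ be independent and $y_i\sim x_i,x_{i+1}$ (indices mod $4$), so $G_X$ is an $8$-cycle. The Godsil--McKay matrix $Q=\tfrac12 J_4-I_4$ replaces each $N(y_i)\cap X$ by its complement in $X$. The resulting $H$ is isomorphic to $G$ via the permutation $(x_1\,x_3)(x_2\,x_4)$, but $Q$ itself is not a permutation. More seriously, when $G_X$ has several cycles, the switching could a priori rewire which path components of $G[X]$ attach to which vertices of $N(X)$. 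That could change cycle lengths or merge and split cycles. Nothing in your proposal rules this out, so the reduction to ``two nearby cycles'' is not established.

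\textbf{What the paper does instead.} It proves $H\cong G$ in case 2 without any rigidity of $Q$, in three steps.
\begin{enumerate}
\item The switching restricts to induced subgraphs. For every $Y\subseteq N(X)$, after deleting edges not touching $X$, the same $Q$ carries $G[X\cup Y]$ to $H[X\cup Y]$. So $G_X[X\cup Y]$ and $H_X[X\cup Y]$ are cospectral for all $Y$.
\item These graphs have maximum degree $2$, and their cospectral pairs are classified (Cvetkovi\'c--Gutman: generated by $C_{2n}+2P_1$ versus $C_4+2P_{n-1}$). It follows that unions of paths are determined by their spectrum in this class. It also follows that two cospectral max-degree-$2$ graphs that become isomorphic unions of paths after deleting one vertex are isomorphic. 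Induction on $|Y|$ then upgrades every cospectrality from step 1 to an isomorphism.
\item The isomorphisms for $Y=\{y\}$ and $Y=\{y,z\}$ give a length-preserving permutation of the path components of $G[X]$. This permutation preserves how many edges each $y\in N(X)$ sends to each component. It therefore assembles into a global isomorphism that fixes $V\setminus X$.
\end{enumerate}
Your probabilistic step (first moment bound $O(\lambda^{2m+2}/n)$) matches the paper's and is fine. It only becomes usable once this deterministic argument is supplied.
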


The proof of \Cref{thm: NoSwitching} is given in \Cref{sec: SwitchingFails}. 
We exploit that a switching can only be applied in a connected graph of minimum degree $2$ if there are some small cycles. 
The $2$-core does not have many of such small cycles, but a few can still occur.
It is therefore possible that a switching is applicable. We show that the result of the switching will be isomorphic to the original graph as long as there are no cycles near each other; see \Cref{fig: GiantZoom2}. 
The assumption that $\lambda = o(n^{1/(2m+2)})$ is here used to rule out the collision of two cycles.

To prove that switchings result in isomorphic graphs if they do not act on connected cycles, we exploit that switching methods also induce cospectral pairs on certain subgraphs. 
We use that cycles are determined by their spectrum and that the cospectrality of their subgraphs are understood to show that the induced cospectral subgraphs must be isomorphic. 
The main difficulty appears in showing that these isomorphisms of induced subgraphs also extend to isomorphisms of the whole $2$-core; see the proof of \Cref{lem:switchingcyclesiso}.  

\begin{figure}[hb]
    \centering
    \includegraphics[width=0.94\linewidth]{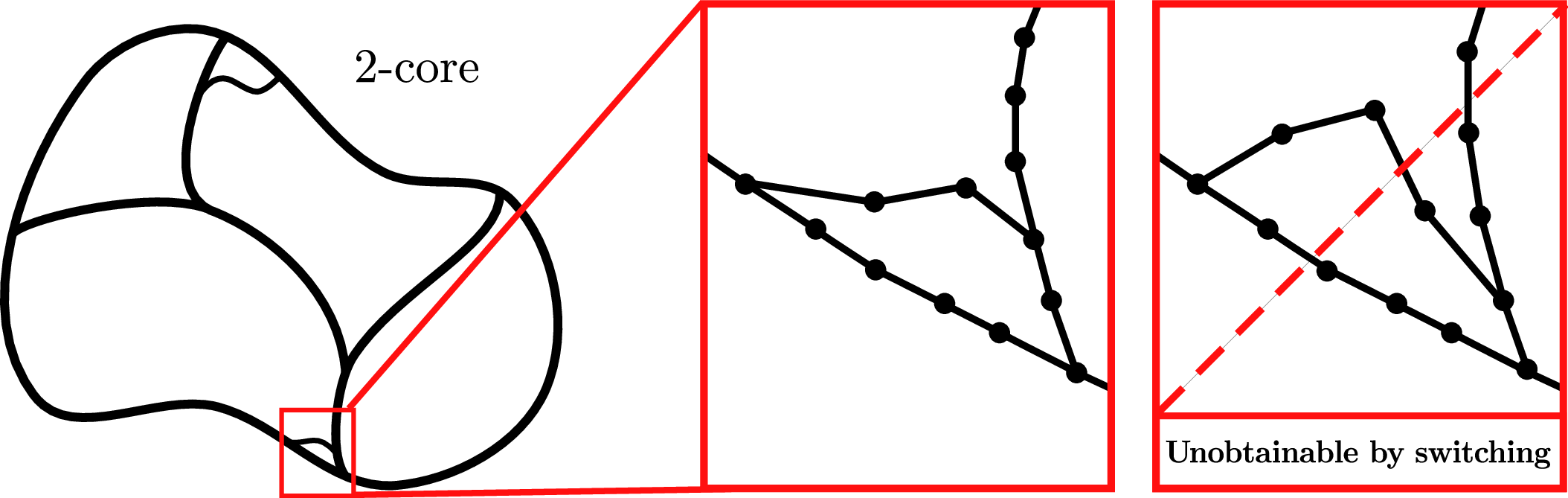}
    \caption{
    The $2$-core of the giant includes a few small cycles with nonzero probability. 
    It is possible for a switching method to act on such a cycle, but we show in the proof of \Cref{thm: GiantCospectral} that it is impossible to obtain a non-isomorphic graph in this fashion.
    }
    \label{fig: GiantZoom2}
\end{figure}

\pagebreak[3]
This new proof strategy also has potential for other graph classes than sparse connected graphs with minimum degree $2$.
Specifically, the benefit of the strategy is that it suffices to understand the cospectrality properties of only the special class of graphs that can occur as small subgraphs, as we do in \Cref{sec: GenCospec}.
One can then leverage this understanding to build a global isomorphism for switching methods. 
A potential bottleneck is that understanding the cospectrality properties for even special classes of graphs can be nontrivial.
Some classes where such properties are relatively well-understood are complete graphs, complete bipartite graphs and graphs with spectral radius at most $2$, see \eg \cite[Section 5]{van2003graphs} and \cite{Cvetkovic1975}.

\subsection{Further theoretical evidence and open questions}\label{sec: EvidenceOpen} 
A proof of \Cref{conj: 2Core} appears out of reach at the moment, considering that Haemers' conjecture is a special case. 
More insight on the sparse setting could however be found by studying other relaxations.

For example, can one prove that any modification of the $2$-core that alters only a bounded number of edges does not yield a non-isomorphic cospectral graph? 
This is not implied by \Cref{thm: NoSwitching}. 
(For instance, because switching must replace the modified subgraph by a cospectral graph.) 
The conceptual difference is more or less that switching only allows modifications that are guaranteed to be cospectral, meaning that the reason for cospectrality must be local, while arbitrary edge modifications could potentially also be cospectral through a global coincidence.   

Another relaxation of classical cospectrality arises by imposing that the orthogonal matrix has rational entries. 
Two graphs $G,H$ are \emph{cospectral through a rational matrix} if there exists orthogonal $Q$ with rational entries such that $A_H = Q^{\T} A_G Q$. 
The least integer $\ell\geq 1$ with $\ell Q \in \bbZ^{n\times n}$ is the \emph{level of $Q$}. 
The rational notion is related to that of $\bbR$-cospectrality in \Cref{sec: GiantIsCospectral}. 
Indeed, it is known for \emph{controllable graphs} that $\bbR$-cospectrality is equivalent to the existence of a rational orthogonal matrix $Q$ with $A_G = Q^{\T} A_G Q$ and $Q e = e$ where $e= (1,\ldots,1)^{\T}$ is the all-ones vector due to Wang and Xu \cite[Lemma 2.4]{wang2006sufficient}, and dense Erd\H{o}s--R\'enyi graphs from the $G(n,p)$ model with $p=1/2$ are known to be controllable due to O'Rourke and Touri \cite[Theorem 1.4]{o2016conjecture}.

Wang and Zhao \cite{wang2025graphscospectralmatefixed} recently proved that cospectrality at a fixed level $\ell$ does not occur with high probability in the dense regime with fixed edge probability $p \in (0,1)$. 
In fact, inspecting the proofs in \cite{wang2025graphscospectralmatefixed} shows that they also extend to a sparser regime with logarithmic average degree: 
\begin{theorem}[Wang and Zhao {\cite{wang2025graphscospectralmatefixed}}]\label{thm: WangLevel}
    For every fixed $\ell \geq 2$ there exists a constant $C>0$ such that the following holds for an Erd\H{o}s--R\'enyi random graph with $p = \lambda/n$ if $C\ln(n) \leq \lambda \leq n/2$: 
    \begin{equation}
        \lim_{n\to \infty}\bbP\bigl(G(n,p) \textnormal{ is cospectral through a rational matrix at level } \ell  \bigr) = 0.
    \end{equation}
\end{theorem}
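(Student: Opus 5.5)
The statement is attributed to Wang and Zhao, and the paper says it follows by inspecting their proofs. So the plan is to rerun the dense-regime argument of \cite{wang2025graphscospectralmatefixed} and check at each step that only the lower bound $p\geq C\ln(n)/n$ is needed, never $p$ bounded away from $0$. The first step is structural. If $A_H = Q^{\T}A_GQ$ with $Q$ rational orthogonal of level $\ell$, then $Q$ is not a permutation matrix precisely when $H$ is not isomorphic to $G$. I will not assume $Qe=e$, since the statement concerns plain cospectrality through a rational matrix; a sign-pattern normalization of the columns will be handled separately. The entries of $\ell Q$ are integers whose squares in each column sum to $\ell^2$. Hence every column of $Q$ has at most $\ell^2$ nonzero entries, each with absolute value at least $1/\ell$.

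Next I localize. Consider the $n\times n$ matrix $\ell Q$. Since $Q$ is not a permutation, some column $q$ has at least two nonzero entries. Its support $S$ satisfies $2\le |S|\le \ell^2$. The relation $A_G Q = Q A_H$ then says that the vector $A_G q$ equals $Q$ applied to a $\{0,1\}$-column of $A_H$. The Wang--Zhao mechanism uses this to derive a linear identity over $\mathbb{Z}$, or modulo a prime $p\mid\ell$ for the reduction at level $\ell$. The identity takes the form $\sum_{s\in S}(\ell q_s)\,A_G[v,s]\in \{\text{values in a fixed finite set}\}$ for all $v$. In other words, for every vertex $v\notin S$, the pattern of adjacency from $v$ to $S$ is forced into a proper subset of $\{0,1\}^S$. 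This subset excludes, for instance, the patterns in which $v$ is adjacent to exactly one vertex of $S$ whose weight is not balanced by the others.

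The probabilistic step is a union bound over pairs consisting of a support set $S$ and a weight vector. There are $O(n^{\ell^2})$ choices of $S$ and $O_\ell(1)$ choices of weights. For a fixed pair, the patterns from the roughly $n-\ell^2$ outside vertices to $S$ are independent. The forbidden event is that $v$ hits the set $S$ in a "bad" pattern, which has probability at least $c\,p$ since it includes single adjacencies. So the probability that no outside vertex does this is at most $(1-cp)^{n-\ell^2}\le \exp(-c\lambda/2)\le n^{-cC/2}$. Taking $C$ large compared to $\ell^2$ makes the union bound $o(1)$.

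The main obstacle is verifying that the dense proof's forbidden pattern really has probability of order $p$ and not $p^2$ or smaller in the sparse regime. In the dense case any fixed nontrivial pattern has constant probability, so this issue never arises. If the natural forbidden event instead needs two adjacencies into $S$, the bound becomes $\exp(-c\lambda^2/n)$, which is useless. I would therefore first check that a column with two or more nonzero entries already forbids the single-neighbour patterns, which should follow since $\{0,1\}$ entries of $A_H$ cannot equal a lone fractional weight $\ell q_s/\ell$ unless it is compensated. If needed, I would also exploit the complementary condition that the vertices in $S$ typically have disjoint neighbourhoods, and handle the case where the column has exactly two entries $\pm 1/\sqrt{2}$-type patterns separately; note that $1/\sqrt2$ is not rational, so in that case the minimal nontrivial weights are $\pm 1/\ell$ with at least $\ell^2\ge 4$ entries.
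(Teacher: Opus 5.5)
\textbf{There is a genuine gap.} Your probabilistic bookkeeping has the right shape: a union bound over $n^{O(\ell^2)}$ supports times a factor $(1-cp)^{\Omega(n)}$ is exactly the form $n^{c_1}\hat p^{c_2 n}$ of the Wang--Zhao estimate. The problem is the structural step that feeds it, which does not hold.

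From $A_G q = Q a$, the $v$-th coordinate reads $\sum_{s\in S} A_G[v,s]q_s = \sum_w Q_{vw}a_w$. The right-hand side is forced into $\{0,\pm1\}$ only when the $v$-th row of $Q$ is integral, i.e.\ when $Q$ fixes $v$. If that row is fractional, a single neighbour $s\in S$ of $v$ is perfectly compatible with the equation. For example, $q_s=\tfrac12=(Qa)_v$ for a row with entries $\pm\tfrac12$. Your suggested repair conflates the $\{0,1\}$-entries of $A_H$ with the entries of $Qa$.

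This matters because a rational orthogonal matrix of fixed level need not have the form $Q_X\oplus I$ with $X$ bounded. A direct sum of $n/4$ copies of the level-$2$ block from \Cref{ex: GosilMcKay} (with $k=2$) has no integral rows at all. So does a connected example such as $\tfrac12$ times a circulant weighing matrix of weight $4$. For such $Q$, no vertex outside $S$ is individually constrained, and the factor $(1-cp)^{n-\ell^2}$ is unjustified. In the bounded case $Q_X\oplus I$, your argument is essentially the observation in the proof of \Cref{thm:noswitching2core} that fixed vertices cannot have exactly one neighbour in $X$. But that is the bounded-switching situation, not the fixed-level one. Handling global $Q$ is precisely the nontrivial content of Wang--Zhao, and your sketch does not supply it.

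The paper does not re-derive any structure. It uses the explicit estimate of Wang--Zhao [eq.~28]: the probability of a cospectral mate of level $\ell'\mid\ell$ is at most $\epsilon_n^2/(1-\epsilon_n)$, with $\epsilon_n\le n^{c_1}\hat p^{c_2n}$, where $\hat p=\max\{p,1-p\}$ and $c_1,c_2$ depend only on $\ell$. Since $p$ enters only through $\hat p$, for $\lambda\le n/2$ one gets $\hat p^{c_2n}=(1-\lambda/n)^{c_2n}\le e^{-c_2\lambda}\le n^{-c_2C}$. Taking $C>c_1/c_2$ then forces $\epsilon_n\to0$. For a self-contained route you would need a correct lemma producing $\Omega(n)$ independently constrained vertices for arbitrary, possibly global, $Q$ of level $\ell$. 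Otherwise, quote the quantitative bound and do this one-line check.

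Two smaller slips:
\begin{enumerate}
\item A non-permutation $Q$ does not imply $H\not\cong G$. Only the converse direction is true, and it is the one you need.
\item Your closing remark about minimal columns is false. For instance, $\tfrac15\begin{pmatrix}3&4\\4&-3\end{pmatrix}$ has level $5$ and only two nonzero entries per column.
\end{enumerate}
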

\begin{proof}
    It is shown in \cite[eq.28]{wang2025graphscospectralmatefixed} that the probability that a $G(n,p)$ graph has a cospectral mate of level $\ell'$ with $\ell' \mid \ell$ is at most $\epsilon_n^2/(1-\epsilon_n)$ for all large $n$ where 
    $
        \epsilon_n \leq n^{c_1} \hat{p}^{c_2n }
    $
    for certain constants $c_1,c_2 >0$ depending on $\ell$ and with $\hat{p} \de  \max\{p,1-p \}$. 
    Using that $\hat{p} = 1-\lambda/n$  now ensures that $\epsilon_n\to 0$ by taking the constant $C$ in the lower bound $C\ln(n) \leq \lambda$ sufficiently large. 
    \end{proof} 
The condition $\ell \geq 2$ is necessary to rule out the trivial case where $Q$ is a permutation matrix that occurs if $\ell=1$. 
\Cref{thm: WangLevel} hence states that it is impossible to have a nontrivial rational cospectrality with a small denominator if the average degree is sufficiently large, thus giving evidence in favor of \Cref{conj: 2Core}. 
Indeed, if the average degree is logarithmic with a large constant, then the giant and its $2$-core both coincide with the full graph. 
This complements the regime with $\lambda$ sufficiently small that was considered in \Cref{thm: NoSwitching}.

Further recall that \Cref{thm: GiantCospectral} yields cospectrality for the giant component if the average degree is logarithmic with an explicit constant. 
Taken together with \Cref{thm: WangLevel} this suggests that logarithmic average degree is a critical scale of sparsity where the graph transitions from almost always being cospectral to almost never cospectral. 
One is led to wonder whether a sharp phase transition occurs for the cospectrality or $\bbR$-cospectrality of the giant as soon as the average degree exceeds that in \eqref{eq: Cospectral_lambda_condition} or \eqref{eq: R-Cospectral_lambda_condition}, respectively. 
It would be interesting future work if one could find evidence to clarify if the constants appearing there should be sharp. 

\subsection{Outline}\label{sec: Outline}
The proofs of \Cref{thm: GiantCospectral} and \Cref{prop: ExponentiallyManyCospectral} are given in \Cref{sec: ProofGiantCospectral}. 
The proof of \Cref{thm: NoSwitching} is given in \Cref{sec: SwitchingFails}.  
We finally consider numerical evidence for a variant of \Cref{conj: 2Core} with $\bbR$-cospectrality in \Cref{sec: NumericalEvidence} using an algorithm by Wang and Wang \cite{wang2025haemers} with some additional optimizations to enable larger graphs.

\section{The giant is cospectral --- Proofs of \texorpdfstring{\Cref{thm: GiantCospectral}}{Theorem} and \texorpdfstring{\Cref{prop: ExponentiallyManyCospectral}}{Proposition}}\label{sec: ProofGiantCospectral}
Recall from \Cref{ex: Pending} that for rooted graphs $H,K$, we denote $H\cdot K$ for the graph found by uniting the graphs and identifying the roots. 
A graph of the form $G \cong H\cdot T$ for some rooted tree $T$ is said to have a \emph{pendant copy of $T$}. 
(Recall also \Cref{fig: GiantZoom}.)
The main content in the present section is to determine for what range of $\lambda$ there exists a pendant copy of a tree $T$ on $t$ vertices in the giant.  
The desired cospectrality results then follow by using trees from \cite{schwenk1973almost,godsil1976some}. 

We use a second moment calculation. 
Identify the vertices of $T$ with $\{1,\ldots,t \}$ such that vertex $t$ refers to the root.     
Further, given vertices $v_1,\ldots, v_t \in \{1,\ldots,n \}$, let $\kT(v_1,\ldots,v_t)$ denote the event where the original graph can be decomposed as $G(n,p) = H\cdot T$ with vertices $1,\ldots,t$ of $T$ being mapped to $v_1,\ldots,v_t$, respectively. 
We count pendant copies of $T$ in the giant component: 
\begin{equation}
        N \de  \sum_{v_1\in \{1,\ldots,n \}} \sum_{v_2\neq v_1} \cdots \sum_{v_t \not\in \{v_1,\ldots,v_{t-1} \}} \bb1\bigl\{ \kT(v_1,\ldots,v_t)  \textnormal{ and }v_t \in  C_{\textnormal{giant}}  \bigr\}.  \label{eq:NormalBeetle}
    \end{equation}
We determine $\bbE[N]$ and $\bbE[N^2]$ in the following \Cref{lem: EN,lem: EN2}, respectively. 

\begin{lemma}\label{lem: EN}
    Fix a rooted tree $T$ on $t\geq 2$ vertices. 
    Assume that $\lambda>1$ remains bounded away from one and that $p = \lambda/n$ tends to zero as $n\to \infty$. 
    Then, with $c_{\lambda} \de  \bbE[\#C_{\textnormal{giant}}/n]$ and $N$ as in \eqref{eq:NormalBeetle},     
    \begin{equation}
        \bbE[N] = \bigl(1-o(1)\bigr) n^t c_{\lambda}  p^{t-1} (1-p)^{(t-1)n}.\label{eq:WittyBall} 
    \end{equation} 
    In particular, $\bbE[N]\to \infty$ if and only if there exists a sequence $h(n)$ with $h(n)\to \infty$ such that 
    \begin{equation} 
        1< \lambda \leq \ln(n)/(t -1) +  \ln(\ln(n)) - h(n).  \label{eq:UnripeSong}
    \end{equation} 
\end{lemma}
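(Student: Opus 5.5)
The plan is to compute $\bbE[N]$ by linearity of expectation over ordered $t$-tuples of distinct vertices. There are $n(n-1)\cdots(n-t+1) = (1-o(1))n^t$ such tuples and, by exchangeability, each contributes the same probability $\bbP(\kT(1,\ldots,t)\textnormal{ and } t\in C_{\textnormal{giant}})$. I would factor this as $\bbP(\kT)\cdot \bbP(t\in C_{\textnormal{giant}}\mid \kT)$ and treat the two factors separately.

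\textbf{First factor.} The event $\kT(1,\ldots,t)$ requires three things:
\begin{enumerate}
\item the $t-1$ tree edges are present;
\item the $\binom{t}{2}-(t-1)$ remaining pairs inside $\{1,\ldots,t\}$ are absent;
\item each of the $t-1$ non-root vertices has no edge to the $n-t$ outside vertices.
\end{enumerate}
These are independent edge events, so
\[
\bbP(\kT) = p^{t-1}(1-p)^{(t-1)(n-t)+\binom{t}{2}-(t-1)} = (1+o(1))\,p^{t-1}(1-p)^{(t-1)n},
\]
because $t$ is fixed and $p\to 0$ give $(1-p)^{-O(t^2)}\to 1$.

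\textbf{Second factor.} Conditionally on $\kT$, the graph induced on $\{t,\ldots,n\}$ is exactly $G(n-t+1,p)$, and vertex $t$ plays the role of a uniform vertex there. The pendant tree only enlarges the component of $t$ by $t-1$ vertices. It is standard (and I would cite it) that for $\lambda$ bounded away from one, whp the giant has linear size and the second-largest component has size $o(n)$. On that event, the largest component of the whole graph is the one containing the largest component of $G(n-t+1,p)$. Hence
\[
\bbP(t\in C_{\textnormal{giant}}\mid \kT) = \frac{\bbE[\#C_{\textnormal{giant}}(G(n-t+1,p))]}{n-t+1}+o(1).
\]
This equals $c_\lambda+o(1)$, because removing or adding $t-1$ vertices changes the expected giant size by $O(t)+o(n)$ in expectation. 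Since $c_\lambda$ is bounded below by a positive constant (as $\lambda$ stays away from $1$), the additive $o(1)$ becomes a multiplicative $1-o(1)$. This yields \eqref{eq:WittyBall}. I expect this step to be the main obstacle: making the comparison of the giant across the two graphs uniform in $\lambda=\lambda_n$, including the regime $\lambda\to\infty$ where $c_\lambda\to 1$.

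\textbf{Asymptotics.} Write $\bbE[N]=(1+o(1))c_\lambda\, n\lambda^{t-1}(1-p)^{(t-1)n}$. When $\lambda^2/n\to 0$ we have $(1-p)^{(t-1)n}=\exp(-(t-1)\lambda+o(1))$, so
\[
\ln \bbE[N] = \ln n + (t-1)(\ln\lambda-\lambda)+O(1).
\]
If instead $\lambda$ is much larger than $\ln n$, the bound $(1-p)^{n}\le e^{-\lambda}$ already forces $\bbE[N]\to0$, consistent with \eqref{eq:UnripeSong} failing there. Thus $\bbE[N]\to\infty$ if and only if
\[
\lambda-\ln\lambda \le \frac{\ln n}{t-1} - g(n)
\]
for some $g\to\infty$. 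In the critical window $\lambda\sim \ln n/(t-1)$ we have $\ln\lambda=\ln\ln n-\ln(t-1)+o(1)$. The bounded discrepancy $\ln(t-1)+o(1)$ can be absorbed into the arbitrary sequence $h(n)\to\infty$, which gives equivalence with \eqref{eq:UnripeSong}. Below the window, $\lambda - \ln \lambda$ is monotone for $\lambda>1$, so smaller $\lambda$ only increases $\bbE[N]$.
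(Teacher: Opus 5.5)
Your proposal is correct and follows essentially the same route as the paper: linearity over ordered $t$-tuples, the exact product formula for the pendant-tree event from edge independence, conditioning to reduce the root's giant membership to the Erd\H{o}s--R\'enyi graph on the remaining $n-t+1$ vertices, and then the asymptotics $(1-p)^{n}\approx e^{-\lambda}$, with the constant $\ln(t-1)$ discrepancy absorbed into $h$. Your comparison of the giant across the two graphs and your treatment of the critical window are somewhat more careful than the paper's, which simply asserts that the conditional probability equals $(1-o(1))c_{\lambda}$ and that $\mathbb{E}[N]$ has order $\exp((t-1)h(n))$.
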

\begin{proof}
    The event $\kT(v_1,\ldots,v_t)$ means that  $v_i \sim v_j$ if and only if $\{i,j \}\in T$, and that there are no edges from the $v_i$ with $i\leq t-1$ to external vertices $v \not\in \{v_1,\ldots,v_{t}\}$.
    Here, the tree $T$ has exactly $t-1$ edges and $\binom{t}{2}-(t-1)$ non-edges. 
    Further, there are exactly $(n-t)(t-1)$ potential edges from the $v_i$ with $i\leq t-1$ to the complement of $\{v_1,\ldots, v_t\}$. 
    Hence, using that edges in the $G(n,p)$ model are independent, 
    \begin{equation} 
        \bbP\bigl(\kT(v_1,\ldots,v_t)\bigr)  = p^{t-1}  (1-p)^{(t-1)(n-t) + \binom{t}{2} - (t-1)}  = \bigl(1-o(1)\bigr) p^{t-1} (1-p)^{(t-1) n},    \label{eq:TinyPen}
    \end{equation}
    where the second step used that $1-p = 1 - o(1)$.
    Conditional on $\kT(v_1,\ldots,v_t)$, the graph induced by $\{1,\ldots,n \} \setminus \{v_1,\ldots,v_{t-1} \}$ is again an Erd\H{o}s--R\'enyi graph with parameter $p$.
    Hence, since $\lambda >1$ remains bounded away from one, it holds with high probability that the latter graph again has a unique giant component containing $(1-o(1)) c_{\lambda}n$ nodes with $c_{\lambda} = \bbE[ \#C_{\textnormal{giant}}/n]$ bounded away from zero. 
    In particular, the probability that $v_t$ belongs to this component is
    \begin{equation} 
        \bbP\bigl(v_t \in C_{\textnormal{giant}} \mid \kT(v_1,\ldots,v_t)\bigr) = (1-o(1))c_{\lambda}.\label{eq:WeepyNet} 
    \end{equation} 
    Using that there are $n(n-1)\cdots (n-t+1)=(1-o(1))n^t$ summands in \eqref{eq:NormalBeetle} now yields \eqref{eq:WittyBall}. 

    That \eqref{eq:UnripeSong} is necessary and sufficient to ensure that $\bbE[N]\to \infty$ now follows by direct calculation.
    Note that $(1- \lambda^2 /n )\exp(-\lambda) \leq (1-\lambda/n)^{n} \leq \exp(-\lambda)$ for $\lambda \leq n$ \cite[p.266]{mitrinovic1970analytic}. 
    The upper bound implies that the right-hand side of \eqref{eq:UnripeSong} tends to zero if $\lambda = \omega(\ln(n))$. 
    It further follows that $(1-\lambda/n)^{(t-1)n} = (1-o(1))\exp(-\lambda (t-1 ))$ if $\lambda = o(\sqrt{n})$. 
    Hence, using that that $n^t p^{t-1} =   n \lambda^{t-1}$  shows that \eqref{eq:WittyBall} tends to infinity at rate $n^{1 -o(1)}$ if $\lambda =o(\ln(n))$, and has order $\exp((t-1)h(n))$ if $\lambda = \Theta(\ln(n))$ with $\lambda = \ln(n)/(t -1) +  \ln(\ln(n)) - h(n)$.  
\end{proof}

\begin{remark}
    If $h(n)$ is replaced by a constant in \eqref{eq:UnripeSong}, then \Cref{lem: EN} shows that $\bbE[N]$ is of a constant order and a more complicated proof would show that $N$ has Poissonian statistics. 
    In particular, $N$ is then zero with nonzero probability. 
    The bound on $\lambda$ in \Cref{cor: PendantTree} is therefore necessary to ensure that $\lim_{n\to \infty}\bbP(N>0)=1$. The rest of the second moment calculation will show that it is also sufficient. 
\end{remark}

\begin{lemma}\label{lem: SharedRoot}
    Fix a rooted tree $T$ on $t\geq 2$ vertices. 
    Consider a connected graph $G$ with at least $2t$ vertices. 
    Then, every pair of pendant copies of $T$ in $G$ with shared vertices must share roots.    
\end{lemma}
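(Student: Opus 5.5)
We argue by contradiction, using the structure of a pendant copy. A pendant copy of $T$ in $G$ is a decomposition $G = H\cdot T$: there is a set $S$ of $t$ vertices, namely the image of $T$, with a distinguished root $r\in S$. The induced subgraph $G[S]$ is a copy of $T$, and every edge between $S$ and $V(G)\setminus S$ is incident to $r$. In particular, $S\setminus\{r\}$ is a union of components of $G-r$.

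Suppose two pendant copies $(S_1,r_1)$ and $(S_2,r_2)$ share a vertex but have $r_1\neq r_2$. We aim to show that $S_1\cup S_2 = V(G)$, which contradicts $|V(G)|\geq 2t > |S_1\cup S_2|$. (If $S_1\cup S_2=V(G)$ then $|V(G)|\le 2t-1$ since they overlap.)

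\textbf{Case $r_2\notin S_1$.} The set $S_2$ induces a connected subgraph that meets $S_1$ and also contains $r_2\notin S_1$. So some edge of $G[S_2]$ leaves $S_1$, and by pendancy this edge is incident to $r_1$, so $r_1\in S_2$. Now $r_1\in S_2\setminus\{r_2\}$. The component $K$ of $G-r_2$ containing $r_1$ therefore lies in $S_2$. Since $G[S_1]$ is connected and avoids $r_2$, we get $S_1\subseteq K\subseteq S_2$, and hence $|S_1|\le|S_2|$. Equal sizes give $S_1=S_2$. But then $r_2\in S_1$, a contradiction. Symmetrically, $r_1\notin S_2$ is impossible.

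\textbf{Case $r_1\in S_2$ and $r_2\in S_1$.} Any vertex $v\notin S_1\cup S_2$ is joined to the connected graph $G$ by a path. Take the first vertex $u$ of that path lying in $S_1\cup S_2$, and say $u\in S_1$. The edge entering $u$ comes from outside $S_1$, so $u=r_1$. Then $u=r_1$ lies in $S_2$ but is not $r_2$, so it has no neighbours outside $S_2$, a contradiction. Hence $S_1\cup S_2=V(G)$, giving the desired contradiction with $|V(G)|\geq 2t$.

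The delicate step is the first case: it needs the observation that non-root vertices of a pendant copy have all their neighbours inside the copy, so that connectivity forces containment. The rest is bookkeeping, and connectedness of $G$ is used only in the final case.
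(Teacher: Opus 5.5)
Your proof is correct and follows essentially the same route as the paper. Both arguments show that each root must lie in the non-root part of the other copy, and then observe that $S_1\cup S_2$ has no edges to its complement, which contradicts connectivity together with $\#V(G)\geq 2t$.

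The differences are cosmetic. You replace the paper's breadth-first neighbourhoods $N_i$ by the observation that $S_2\setminus\{r_2\}$ is a union of components of $G-r_2$. You also obtain $r_1\in S_2$ from the connectivity of the tree $G[S_2]$, rather than from an edge of $G$ leaving $S_1\cap S_2$; as a result your first case does not use the connectivity of $G$.
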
 
\begin{proof}
    Assume that we are given sets of vertices with $\{v_1,\ldots,v_t \} \cap \{w_1,\ldots,w_t \} \neq \emptyset$ such that $\{v_1,\ldots,v_t \}$ and $\{w_1,\ldots,w_t \}$ both yield pendant copies of $T$ in $G$ with roots $v_t$ and $w_t$, respectively.  
    The claim is that it must hold that $v_t = w_t$.
    Suppose to the contrary that $v_t\neq w_t$.

    That $G$ is connected with at least $2t > t$ vertices implies that $\{v_1,\ldots,v_t \} \cap \{w_1,\ldots,w_t\}$ has at least one edge leading to its complement. 
    Assume that there is such an edge to the complement of $\{v_1,\ldots,v_t \}$, the other case proceeds identically. 
    Then, since the definition of pendant trees yields that $v_t$ is the only vertex in $\{v_1,\ldots,v_t \}$ that may connect to external vertices, we have $v_t \in \{w_1,\ldots,w_t \}$ and even $v_t \in \{w_{1},\ldots,w_{t-1} \}$ since $v_t \neq w_t$. 
    Now consider the neighborhoods:
    \begin{equation}
        N_0 \de \{v_t \}, \qquad  N_{i+1} \de N_i \cup \bigl\{v\in \{v_1,\ldots,v_t \}: \exists v'\in N_i,\, v\sim v' \bigr\}. \label{eq:GladEel}
    \end{equation}
    Then, using that $w_t$ is the only vertex in $\{w_1,\ldots,w_t \}$ with external neighbors, it holds that  
    \begin{equation}
        N_{i}\subseteq \{w_1,\ldots,w_{t-1}\} \implies N_{i+1}\subseteq\{w_1,\ldots,w_t \}.\label{eq:VividParrot}
    \end{equation}  
    Note that $N_0 = \{v_t \} \subseteq \{w_1,\ldots,w_{t-1} \}$, but it is not possible that $N_i \subseteq  \{w_1,\ldots,w_{t-1}\}$ for all $i$ since the connectivity of $T$ implies that $N_{i} = \{v_1,\ldots,v_t \}$ has cardinality $t$ for large $i$. 
    Consequently, \eqref{eq:VividParrot} implies that there exists $i$ with $w_{t}\in N_i$. 

    The definition \eqref{eq:GladEel} yields that $N_i\subseteq \{v_1,\ldots,v_t \}$.  
    This implies that $w_t \in \{v_1,\ldots,v_{t} \}$ and hence 
    $
        w_t \in \{v_1,\ldots,v_{t-1} \}
    $ 
    since $v_t \neq w_t$. 
    Now, also using that $v_{t} \in \{w_{1},\ldots,w_{t-1} \}$, it follows that there are no edges from $\{v_1,\ldots,v_t \}\cup \{w_1,\ldots,w_t \}$ to its complement. 
    This contradicts the assumption that $G$ is connected with at least $2t$ vertices since $\# \{v_1,\ldots,v_t \}\cup \{w_1,\ldots,w_t \} = 2t - \#\{v_1,\ldots,v_t \}\cap \{w_1,\ldots,w_t \} < 2t$.
    We conclude that $v_t = w_t$. 
\end{proof}
\begin{lemma}\label{lem: EN2}
     Fix a rooted tree $T$ on $t\geq 2$ vertices. 
     Assume that $\lambda>1$ remains bounded away from one and that there exists a sequence $h(n)$ with $h(n)\to \infty$ such that \eqref{eq:UnripeSong} holds.
     Then,  
     \begin{equation}
        \bbE[N^2] = (1-o(1)) \bbE[N]^2.\label{eq:DryUnit} 
     \end{equation}
\end{lemma}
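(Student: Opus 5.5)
Expand $N^2$ as a double sum over ordered tuples $(v_1,\ldots,v_t)$ and $(w_1,\ldots,w_t)$ of distinct vertices, and split by how the two vertex sets intersect. Since $\mathbb{E}[N^2]\geq \mathbb{E}[N]^2$ always holds, only the upper bound $\mathbb{E}[N^2]\leq (1+o(1))\mathbb{E}[N]^2$ is needed.

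\textbf{Overlapping pairs.} Both indicators contain $v_t,w_t\in C_{\textnormal{giant}}$. With high probability the giant is connected and has at least $2t$ vertices, so \Cref{lem: SharedRoot} applies to overlapping pairs: they must share the root, $v_t=w_t$. Write $S=\{v_1,\ldots,v_{t-1}\}$ and $S'=\{w_1,\ldots,w_{t-1}\}$. In a pendant copy every vertex of $S$ has all its neighbours inside $S\cup\{v_t\}$, so each is joined to $v_t$ by a path avoiding outside vertices; the same holds for $S'$. A short argument then shows that if $S\cap S'\neq\emptyset$, the whole component of $G-v_t$ containing that shared vertex lies in both sets. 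So $S$ and $S'$ are unions of branches at $v_t$.

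The number of such configurations is at most the number of choices of $v_t$ and a subset union of $\le 2t-2$ vertices. Its expectation is bounded by $n\cdot n^{|S\cup S'|}p^{|S\cup S'|}(1-p)^{|S\cup S'|n}$ up to constants, since each vertex in $S\cup S'$ is isolated from the outside and brings one tree edge. Writing $k=|S\cup S'|\geq t-1$, this is $n\lambda^{k}e^{-\lambda k}(1+o(1))$. Compare it with $\mathbb{E}[N]^2\asymp n^2\lambda^{2t-2}e^{-2\lambda(t-1)}$, which tends to infinity by \Cref{lem: EN}. For $k$ between $t-1$ and $2t-2$, the ratio is at most $\bigl(n\lambda^{t-1}e^{-\lambda(t-1)}\bigr)^{-1}\cdot \max(1,(\lambda e^{-\lambda})^{k-(t-1)})\leq \mathbb{E}[N]^{-1}\cdot O(1)\to 0$. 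The small constant-factor bookkeeping for $c_\lambda$ is harmless.

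\textbf{Disjoint pairs.} For $\{v_i\}\cap\{w_j\}=\emptyset$, first handle the tree events. The joint event $\kT(v)\cap\kT(w)$ requires the tree edges and the non-edges inside each set, together with no edges from $S\cup S'$ to the outside. The potential edges between $S$ and $S'$, and between $S$ and $w_t$ etc., are counted in both individual events but only once jointly. Hence
\[\mathbb{P}(\kT(v)\cap\kT(w))=p^{2t-2}(1-p)^{2(t-1)n-O(t^2)}=(1+o(1))\,\mathbb{P}(\kT(v))\,\mathbb{P}(\kT(w)),\]
using $p\to0$; here \eqref{eq:UnripeSong} gives $\lambda=O(\ln n)$ and $(1-p)^{-O(1)}\to1$.

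Next handle the giant. Conditional on both tree events, the graph on the remaining $n-2t+2$ vertices is $G(n-2t+2,p)$. The probability that both $v_t$ and $w_t$ lie in its giant is $(1+o(1))c_\lambda^2$. This follows from concentration of the giant size, $\#C_{\textnormal{giant}}/n\to c_\lambda$ in probability, together with exchangeability of vertices: $\mathbb{E}[\#C(\#C-1)]/(n(n-1))\to c_\lambda^2$. The giant of the reduced graph coincides with $C_{\textnormal{giant}}$ restricted appropriately, as in \Cref{lem: EN}. Summing over the $(1+o(1))n^{2t}$ disjoint pairs gives $(1+o(1))\mathbb{E}[N]^2$.

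The main obstacle is the overlapping case: making rigorous that overlapping copies force a shared root and a union-of-branches structure, and that the count of such configurations is dominated by $\mathbb{E}[N]$ rather than by $\mathbb{E}[N]^2$. The rest is routine.
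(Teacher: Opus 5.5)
Your proposal is correct and follows the paper's proof. Disjoint pairs give $(1+o(1))\bbE[N]^2$ by near-independence of the tree events and the giant-membership events. Overlapping pairs are forced by \Cref{lem: SharedRoot} to share a root, after which the union is a pendant tree and contributes $O(n\lambda^{k}e^{-\lambda k})$ with $k=\#(S\cup S')$.

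Your bound $n\lambda^{k}e^{-\lambda k}\asymp \bbE[N]\cdot(\lambda e^{-\lambda})^{k-(t-1)}=O(\bbE[N])$, which uses $\lambda e^{-\lambda}<1$, treats all overlaps uniformly. It is a slightly cleaner substitute for the paper's case split between $\lambda=o(\ln n)$ and $\lambda=\Theta(\ln n)$.
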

\begin{proof}
    It follows similarly to \eqref{eq:TinyPen}--\eqref{eq:WeepyNet} that for every pair of sets of vertices satisfying $\{v_1,\ldots,v_t \}\cap \{w_1,\ldots,w_t \} = \emptyset$ that with $c_{\lambda} = \bbE[\#C_{\textnormal{giant}}/n]$, 
    \begin{equation} 
        \bbP(\kT(v_1,\ldots,v_t), \kT(w_1,\ldots,w_t), v_t\in C_{\textnormal{giant}},  w_t \in C_{\textnormal{giant}})= \bigl(1 - o(1)\bigr) \bigl(c_{\lambda}  p^{t-1} (1-p)^{(t-1) n} \bigr)^2.    
    \end{equation}    
   The double sum that results from substituting the definition \eqref{eq:NormalBeetle} of $N$ in $\bbE[N^2]$ has $(1-o(1)) n^{2t}$ terms indexed by vertex sets with $\{v_1,\ldots,v_t \}\cap \{w_1,\ldots,w_t \} = \emptyset$. 
   Thus, a contribution of order $(1-o(1))(n^t c_{\lambda}  p^{t-1} (1-p)^{(t-1)n})^2 = (1-o(1)) \bbE[N]^2$ arises from such terms.
    Consequently, 
    \begin{align} 
        \bbE\bigl[N^2\bigr] ={}&{} \bigl(1-o(1)\bigr)\bbE\bigl[N\bigr]^2\label{eq:JollyRug} \\ 
        &\  +   \sum_{\{v_1,\ldots,v_t \} \cap \{w_1,\ldots,w_t \} \neq \emptyset} \negquad  \bbP\bigl(\kT(v_1,\ldots,v_t), \kT(w_1,\ldots,w_t), v_t\in C_{\textnormal{giant}}, w_t \in C_{\textnormal{giant}}\bigr).\nonumber   
    \end{align}   
    It remains to bound the sum on the right-hand side of \eqref{eq:JollyRug}.     
    
    \Cref{lem: SharedRoot} ensures that all terms in \eqref{eq:JollyRug} with $v_t \neq w_t$ are zero.  
    Further, for every $i\leq t-1$, there are $O(n^{2t-i-1})$ terms with $v_t = w_t$ and $\#\{ v_1,\ldots,v_{t-1}\}\cap \{w_1,\ldots w_{t-1} \} = i$.
    It is only possible for both $\kT(v_1,\ldots,v_t)$ and $\kT(w_1,\ldots,w_t)$ to occur if $\{v_1,\ldots,v_t \}\cup \{w_1,\ldots,w_t \}$ induces a tree on $2t - i -1$ vertices with root $v_t = w_t$, and all vertices except $v_t = w_t$ do not have external connections.  
    It hence follows similarly to  \eqref{eq:TinyPen}--\eqref{eq:WeepyNet} that each such term in \eqref{eq:JollyRug} has magnitude $O(p^{2t -i - 2} (1-p)^{(2t - i -2)n})$.
    Thus, using that $n^{2t-i-1} p^{2t-i-2} = n\lambda^{2t-i-2}$ and $(1-p)^{(2t-i-2)n} = O(\exp(-\lambda (2t-i-2) ))$,  
    \begin{equation} 
        \bbE\bigl[N^2 \bigr] = \bigl(1+o(1)\bigr) \bbE\bigl[N\bigr]^2 + O\Bigl(\sum_{i\leq t-1} n \lambda^{2t-i-2} \exp\bigl(-\lambda (2t-i-2) ) \bigr)  \Bigr).\label{eq:YoungCobra}
    \end{equation}
    By \eqref{eq:WittyBall} and the discussion after \eqref{eq:WeepyNet}, we have $\bbE[N] =(1-o(1)) n c_{\lambda}\lambda^{t-1} \exp(-\lambda (t-1))$. 
    Here, $c_{\lambda}>0$ remains bounded away from zero by the assumption that $\lambda$ remains bounded away from one.
    It follows that $n \lambda^{2t-i-2} \exp(-\lambda (2t-i-2) ) )/ \bbE[N]^2 = O(n^{-1} \lambda^{-i} \exp(i\lambda ))$. 

    If $\lambda = o(\ln(n))$ then certainly $n^{-1} \lambda^{-i} \exp(i\lambda ) \to 0$ and we may conclude that all terms in the sum in \eqref{eq:YoungCobra} are of order $o(1) \bbE[N]^2$.
    Now suppose that $\lambda = \Theta(\ln(n))$. 
    By modifying the sequence $h$, we can further assume that equality holds in \eqref{eq:UnripeSong}. 
    Then,
    \begin{equation}
        \frac{n \lambda^{2t-i-2} \exp\bigl(-\lambda (2t-i-2) ) \bigr)}{\bbE[N]^2} = O\biggl(\frac{\exp(i\lambda)}{n\lambda^i } \biggr) = O\biggl( \exp\Bigl( \frac{i - (t-1)}{t-1} \ln(n) - i h(n) \Bigr)\biggr).\label{eq:UnripeTin}
    \end{equation}
    Note that $\exp[(i -(t-1))\ln(n)/(t-1)] = O(1)$ for $i\leq t-1$  and recall that $h(n) \to \infty$ to conclude that all terms in the sum in \eqref{eq:YoungCobra} are of order $o(1) \bbE[N]^2$. 
    This proves \eqref{eq:DryUnit}. 
\end{proof}

\begin{corollary}\label{cor: PendantTree}
    Fix a rooted tree $T$ on $t\geq 2$ vertices. 
    Assume that $\lambda>1$ remains bounded away from $1$ and that there exists a sequence $h(n)$ with $h(n)\to \infty$ such that \eqref{eq:UnripeSong} is satisfied. 
    Then, the giant component in $G(n,p)$ has a pendant copy of $T$ with probability tending to one: 
    \begin{equation}
        \lim_{n\to \infty} \bbP\bigl(\exists H: C_{\textnormal{giant}} \cong H \cdot T \bigr)  = 1. \label{eq:NiftyPop} 
   \end{equation}
\end{corollary}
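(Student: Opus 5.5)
The plan is a second moment argument combining \Cref{lem: EN} and \Cref{lem: EN2}. Under the hypothesis \eqref{eq:UnripeSong}, \Cref{lem: EN} gives $\bbE[N]\to\infty$, in particular $\bbE[N]>0$ for large $n$. \Cref{lem: EN2} gives $\bbE[N^2]=(1-o(1))\bbE[N]^2$; since $\bbE[N^2]\geq \bbE[N]^2$ anyway, this means $\bbE[N^2]/\bbE[N]^2\to 1$. By the Paley--Zygmund (or Cauchy--Schwarz) inequality,
\begin{equation*}
    \bbP(N>0)\geq \frac{\bbE[N]^2}{\bbE[N^2]} \to 1 .
\end{equation*}
Chebyshev would serve equally well, since $\operatorname{Var}(N)=o(\bbE[N]^2)$.

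It then remains to check that $N>0$ implies the event in \eqref{eq:NiftyPop}. If $N>0$, there exist distinct $v_1,\ldots,v_t$ such that $\kT(v_1,\ldots,v_t)$ holds and $v_t\in C_{\textnormal{giant}}$. The event $\kT$ says that $\{v_1,\ldots,v_t\}$ induces a copy of $T$ in $G(n,p)$ and that $v_1,\ldots,v_{t-1}$ have no neighbours outside this set. Since $v_t$ lies in the giant and $v_1,\ldots,v_{t-1}$ are connected to $v_t$ through the tree, all of $v_1,\ldots,v_t$ lie in $C_{\textnormal{giant}}$. Their only connections to the rest of the component pass through $v_t$. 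Taking $H$ to be the subgraph of $C_{\textnormal{giant}}$ induced on $C_{\textnormal{giant}}\setminus\{v_1,\ldots,v_{t-1}\}$, rooted at $v_t$, gives $C_{\textnormal{giant}}\cong H\cdot T$.

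The only point needing some care is the meaning of ``$v_t\in C_{\textnormal{giant}}$'' in the definition of $N$. In the proofs of the lemmas this was computed through the giant of the graph with $v_1,\ldots,v_{t-1}$ removed. For the corollary we should make sure that a vertex in that giant also lies in the largest component of the full graph. This holds with high probability: adding a pendant tree of bounded size cannot change which component is largest, because the giant has order $c_\lambda n$ while every other component has size $O(\ln n)$ with high probability. Any discrepancy is therefore absorbed into an event of vanishing probability.

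The main obstacle is really the variance bound, which is already done in \Cref{lem: EN2}. So the only genuine step here is this identification, plus noting that $C_{\textnormal{giant}}$ contains more than $t$ vertices with high probability, so that $H$ is a nontrivial graph.
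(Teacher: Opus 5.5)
Your proposal is correct and matches the paper's proof: combine \Cref{lem: EN} and \Cref{lem: EN2} in a second moment argument. The paper applies Chebyshev to get $N\to\infty$ in probability, which for this purpose is equivalent to your bound $\bbP(N>0)\geq \bbE[N]^2/\bbE[N^2]$. Your check that $N>0$ gives $C_{\textnormal{giant}}\cong H\cdot T$ is right; your remark about identifying the giant of the graph with $v_1,\ldots,v_{t-1}$ removed with the largest component of the full graph concerns the computation inside \Cref{lem: EN}, not the corollary, since $N$ is already defined using the largest component of the full graph.
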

\begin{proof}
    \Cref{lem: EN,lem: EN2} yield that $\bbE[N]\to \infty$ and $\bbE[N^2] = (1+o(1))\bbE[N]^2$. 
    Hence, Chebychev's inequality then yields that $N\to \infty$ in probability.  
    In particular, we then have $N>0$ with probability tending to one which yields \eqref{eq:NiftyPop}.  
\end{proof}

\begin{proof}[Proof of \texorpdfstring{\Cref{thm: GiantCospectral}}{Theorem}]
    Schwenk \cite{schwenk1973almost} gave rooted trees $T_1, T_2$ on $9$ vertices such that for every rooted graph $H$ the graphs $G_1 = H\cdot T_1$ and $G_2 = H \cdot  T_2$ are cospectral; see also \cite[p.65]{godsil1993algebraic}.
    Moreover, Godsil and McKay \cite[Section 9]{godsil1976some} found rooted trees $T_1', T_2'$ on $11$ vertices such that $H\cdot T_1'$ and $H \cdot  T_2'$ are $\bbR$-cospectral. 
    The desired result is hence immediate from \Cref{cor: PendantTree}. 
\end{proof}

Recall that \Cref{prop: ExponentiallyManyCospectral} assumed that $\lambda>1$ is fixed, and claimed that the giant component then has exponentially many cospectral mates. 
This again follows by swapping pendant trees:

\begin{proof}[Proof of \texorpdfstring{\Cref{prop: ExponentiallyManyCospectral}}{Proposition}]
    Let $T_1,T_2$ be the trees producing $\bbR$-cospectrality due to Godsil and McKay \cite{godsil1976some}.  
    Denote $\cV_{T_1}$ for the set of vertices in the $2$-core that have a pendant copy of $T_1$ attached.
    Then, we claim that there exists $c_1>0$ with 
    $
        \lim_{n\to \infty}\bbP(\#\cV_{T_1}\geq c_1n)=1. 
    $
    This could be deduced from a second moment calculation similar to \Cref{thm: GiantCospectral}, and also follows directly from a result by Ding, Lubetzky, and Peres  \cite[Theorem 1]{ding2014anatomy} that establishes that the giant is contiguous for fixed $\lambda>1$ to a model with independent Galton-Watson trees attached to the vertices in the $2$-core. 

    For any $S \subseteq \cV_{T_1}$ let $G_S$ denote the graph found from $C_{\textnormal{giant}}$ by replacing all copies of $T_1$ rooted in $S$ by a copy of $T_2$. 
    Then, $G_S$ is $\bbR$-cospectral to $C_{\textnormal{giant}}$.
    Further, if $\cV_{\textnormal{aut}}$ denotes the set of vertices in the $2$-core that admit a non-trivial orbit under automorphisms, then the graphs $\{G_{S} : S\subseteq \cV_{T_1}\setminus \cV_{\textnormal{aut}} \}$ are all pairwise non-isomorphic. 
    Consequently, 
    \begin{equation}
        \#\{G: G \textnormal{ is } \bbR\textnormal{-cospectral to }C_{\textnormal{giant}}  \} \geq \#\{G_{S} : S\subseteq \cV_{T_1}\setminus \cV_{\textnormal{aut}} \}  = 2^{\#\cV_{T_1} -\#\cV_{\textnormal{aut}}}.\label{eq:JumpyHat} 
    \end{equation}
    It is known due to Verbitsky and Zhukovskii \cite[Theorem 1.2]{verbitsky2024canonical} that $\#\cV_{\textnormal{aut}}$ is bounded in probability, meaning that $\lim_{C\to \infty} \limsup_{n\to \infty} \bbP(\#\cV_{\textnormal{aut}} >C) = 0$. 
    Consequently, it holds for any $\varepsilon>0$ that $\lim_{n\to \infty}\bbP(\#\cV_{T_1} -\#\cV_{\textnormal{aut}} \geq (c_1 -\varepsilon)n) = 1$. Combine this with \eqref{eq:JumpyHat} to conclude. 
\end{proof}

\section{A switching method in a 2-core needs at least two cycles -- Proof of \texorpdfstring{\Cref{thm: NoSwitching}}{Theorem}}\label{sec: SwitchingFails}

Recall from \Cref{def: SwitchingOfSizeM} that a switching method with size $\leq m$ involves a conjugation of the adjacency matrix with $Q \oplus I_{n-m}$ for some $m\times m$ orthogonal matrix $Q$. 
It is here allowed that $Q$ may have integral rows, which is the reason for the inequality as we could potentially achieve the same operation with a smaller matrix.
More precisely, due to orthogonality, any integral row of $Q$ has a single entry $\pm 1$ and only zeros in the remaining entries. 

If all rows of $Q$ are integral, then it is simply a signed permutation matrix and the signs may be taken positive. 
In that case, the switching will result in a graph that is isomorphic to the original graph. 
Let us hence suppose that $Q$ has non-integral rows and denote $\tilde{Q}$ the largest principal submatrix of $Q$ that has no integral rows. 
Then, we refer to the operation as a \emph{$\tilde{Q}$-switching}. 
The vertices of $G$ corresponding to this submatrix are called the \emph{switching set}. 
Thus, in the Godsil--McKay switching of \Cref{ex: GosilMcKay} the switching set was $X$.

From here on, our main focus will be the non-integral submatrix $\tilde{Q}$. 
We hence drop the tilde from here on. 
Our main technical result is \Cref{thm:noswitching2core}:

\begin{theorem}\label{thm:noswitching2core}
Let $G$ be a connected graph with minimum degree $\geq 2$ and consider an orthogonal matrix $Q\in \bbR^{k\times k}$ with no integral entries. 
Suppose that $G$ has a non-isomorphic cospectral mate through a $Q$-switching. 
Then, $G$ has a connected subgraph with at most $2k+1$ vertices and strictly more edges than vertices.  
\end{theorem}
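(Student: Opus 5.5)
The plan is to prove the contrapositive. Assume $G$ is connected with minimum degree $\geq 2$, and that every connected subgraph on at most $2k+1$ vertices has at most as many edges as vertices. Each such subgraph is then a tree or unicyclic. We show that any $Q$-switching with $Q$ free of integral rows yields a graph isomorphic to $G$.

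\textbf{Block form and local constraints.} Order the vertices so that the switching set $X$ (with $\#X=k$) comes first, and write
\[
A_G=\begin{pmatrix} B & C\\ C^{\T} & D\end{pmatrix},\qquad
A_H=\begin{pmatrix} Q^{\T}BQ & Q^{\T}C\\ C^{\T}Q & D\end{pmatrix}.
\]
Two constraints follow.
\begin{enumerate}
\item $Q^{\T}BQ$ must be the adjacency matrix of some graph, so $G[X]$ and $H[X]$ are cospectral.
\item Every column $c_v$ of $C$, indexed by $v\notin X$, must satisfy $Q^{\T}c_v\in\{0,1\}^k$. Since $Q$ preserves norms and inner products, the map $c_v\mapsto Q^{\T}c_v$ preserves the sizes of neighbourhoods in $X$ and the sizes of their pairwise overlaps.
\end{enumerate}
More generally, for any set $Y'$ of outside vertices, $Q$ conjugates the bordered matrix of $G[X\cup Y']$ to that of $H[X\cup Y']$. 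So the switching induces cospectral pairs on many small subgraphs, and also on their vertex-deleted versions, since $Q$ fixes everything outside $X$. The natural choice is to take for $Y'$ at most $k+1$ outside vertices, so that these subgraphs have at most $2k+1$ vertices. That is exactly where the bound in the statement comes from.

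\textbf{Pinning down the structure.} The first goal is to show that the switching can only act near a cycle. If $G[X]$ together with its attachments were a forest, then a leaf-type vertex of $X$ would give a contradiction with $Q$ having no integral row. Here we use minimum degree $\geq 2$: every vertex of $X$ has at least two neighbours, inside or outside $X$, and the local tree structure lets us peel vertices off one at a time. Each peeling step should force the corresponding row of $Q$ to be $\pm$ a standard basis vector.

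So a cycle must be involved. Under our hypothesis, any connected piece $X\cup Y'$ on at most $2k+1$ vertices contains at most one cycle, and consists of that cycle with pendant trees hanging off it. We then use that cycles are determined by their spectrum, together with the known classification of cospectral mates among graphs with spectral radius at most $2$ and unicyclic graphs of this shape (the subgraph-cospectrality understanding from \Cref{sec: GenCospec}). This should show that each induced cospectral pair $G[X\cup Y']$, $H[X\cup Y']$ is in fact isomorphic, via an isomorphism that is the identity on $Y'$ up to symmetries of the cycle.

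\textbf{Main obstacle.} The hard step is to glue these local isomorphisms into one global isomorphism $G\cong H$. We must rule out that different outside vertices force incompatible relabellings of $X$. For example, one attachment might force a rotation of the cycle while another forces a reflection. My first attempt will be to choose a single canonical identification, namely the automorphism of the unique short cycle induced by $Q$ restricted to the span of the cycle's eigenvectors. I will then check that every outside neighbourhood vector $c_v$ is mapped consistently with it.

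If two attachments demanded different identifications, their neighbourhoods in $X$ together with $X$ should span a connected subgraph on at most $2k+1$ vertices with a second cycle. That contradicts the hypothesis and would conclude the argument.
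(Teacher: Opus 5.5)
Your setup (contrapositive, cospectral pairs induced on subgraphs as in \Cref{lem:subgraphcospectral}, spectral facts about cycles) uses the right ingredients. However, the proposal never determines the shape of the neighbourhood of $X$, and without that the local isomorphism claims are unsupported. The missing observation is that no $v\notin X$ can have exactly one neighbour $x\in X$: then $c_v=e_x$, and $Q^Te_x\in\{0,1\}^k$ would make the $x$-th row of $Q$ a standard basis vector. So every vertex of $N(X)\setminus X$ has at least two neighbours in $X$. Since vertices of $X$ keep all their edges, the neighbourhood graph $G_X$ (all edges meeting $X$) has minimum degree $\geq 2$.

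Under your hypothesis this forces $G_X$ to be a disjoint union of cycles. A vertex of degree $\geq 3$ on a cycle $C_0$ of $G_X$, together with a chord or a maximal path in $G_X\setminus C_0$ leaving it, gives a connected subgraph with more edges than vertices. Because no two outside vertices are adjacent in $G_X$, they alternate with vertices of $X$, so this subgraph has at most $2\#X+1$ vertices. That alternation, not a choice of $k+1$ outside vertices, is where the bound $2k+1$ comes from.

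Your peeling paragraph contains half of this argument but stops at ``a cycle must be involved''. You then work with a cycle carrying pendant trees, and for that class isomorphism of $G[X\cup Y']$ and $H[X\cup Y']$ does not follow from any classification. Such graphs have spectral radius $>2$, so the Cvetkovi\'c--Gutman result behind \Cref{thm:pathscycles} does not apply. Moreover, the remark after the paper's proof shows that with only one cycle in $G_X$ and a single vertex of degree one, a switching can already produce a non-isomorphic graph. Any valid argument must therefore use the minimum-degree information in $G_X$, which your sketch never extracts.

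The gluing step also rests on a false premise: there is no ``unique short cycle''. Your hypothesis only constrains connected subgraphs, $X$ and $G_X$ need not be connected, and $Q$ can mix vertices lying on different cycles. For example, take $X=\{x_1,\dots,x_4\}$ with $G[X]$ empty, $y_1,y_2$ adjacent to $x_1,x_2$, and $y_3,y_4$ adjacent to $x_3,x_4$. Then $G_X$ is two disjoint $4$-cycles, and Godsil--McKay switching on $X$ exchanges their $X$-parts. The relabelling of $X$ realising $G\cong H$ with $V\setminus X$ fixed must send $\{x_1,x_2\}$ to $\{x_3,x_4\}$, which is not a symmetry of any single cycle.

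An identification via ``$Q$ restricted to the span of the cycle's eigenvectors'' does not even define a vertex map, since $Q$ is not a permutation. The claim that conflicting identifications would create a second short cycle also has no argument behind it. In the union-of-cycles case there is no second cycle to find, so the isomorphism must be built directly.

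That construction is the real content of the paper's proof. \Cref{lem:cyclesDS} shows by induction on $Y\subseteq N(X)$, via \Cref{lem:addingtounionofpaths}, that $G_X[X\cup Y]\cong H_X[X\cup Y]$ for every $Y$. \Cref{lem:switchingcyclesiso} then uses the cases $Y=\{y\}$ and $Y=\{y,z\}$ to build a permutation of the path components of $G_X[X]$ that preserves lengths and attachment points. This permutation extends to an isomorphism fixing $V\setminus X$. You correctly flag gluing as the main obstacle, but the proposal supplies no mechanism that would overcome it.
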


Note that a connected graph with more edges than vertices, must have at least two cycles. 
Hence, we could also view \Cref{thm:noswitching2core} as stating that $G$ must have a `small' connected subgraph with at least two cycles.

In the proof of \Cref{thm:noswitching2core} the required subgraph is found in the neighborhood graph of the switching set $X$; see \Cref{def: NeighbourhoodGraph}. 
We show in \Cref{sec: Cycles} that the switching method will result in an isomorphic graph if the neighborhood graph is a disjoint union of cycles, which is used in the proof of \Cref{thm:noswitching2core} to ensure that a non-isomorphic switching requires at least one vertex with degree $\geq 3$ in the neighborhood graph. 
The latter vertex is used as the starting point to construct the connected subgraph from the statement of \Cref{thm:noswitching2core}; see \Cref{sec: ProofsSwitching}. 

Given \Cref{thm:noswitching2core}, the proof of \Cref{thm: NoSwitching} follows readily since small subgraphs with more edges than vertices are improbable when $\lambda$ is small. 
We refer to \Cref{sec: ProofsSwitching} for the details.  

\subsection{Switching methods induce cospectrality on subgraphs}\label{sec: CospectralityOnSubgraphs}
Recall that a switching method only modifies a small subgraph and the edges connected to that subgraph.
This will be made precise using the following \Cref{def: NeighbourhoodGraph}, visualized in \Cref{fig: Sx}.

\begin{definition}\label{def: NeighbourhoodGraph}
    Let $G = (V,E)$ be a graph and $X$ be a subset of the vertices. 
    The \emph{neighborhood graph} of $G$ for $X$ is the graph $G_{X}$ that has vertex set $X \cup N(X)$ with $N(X) \de \cup_{x\in X}\{v\in V: v\sim x \}$ and edge set $\{e \in E : e \cap X \geq 1\}$.
\end{definition}

\begin{figure}[hbt]
    \centering
    \includegraphics[width=0.5\linewidth]{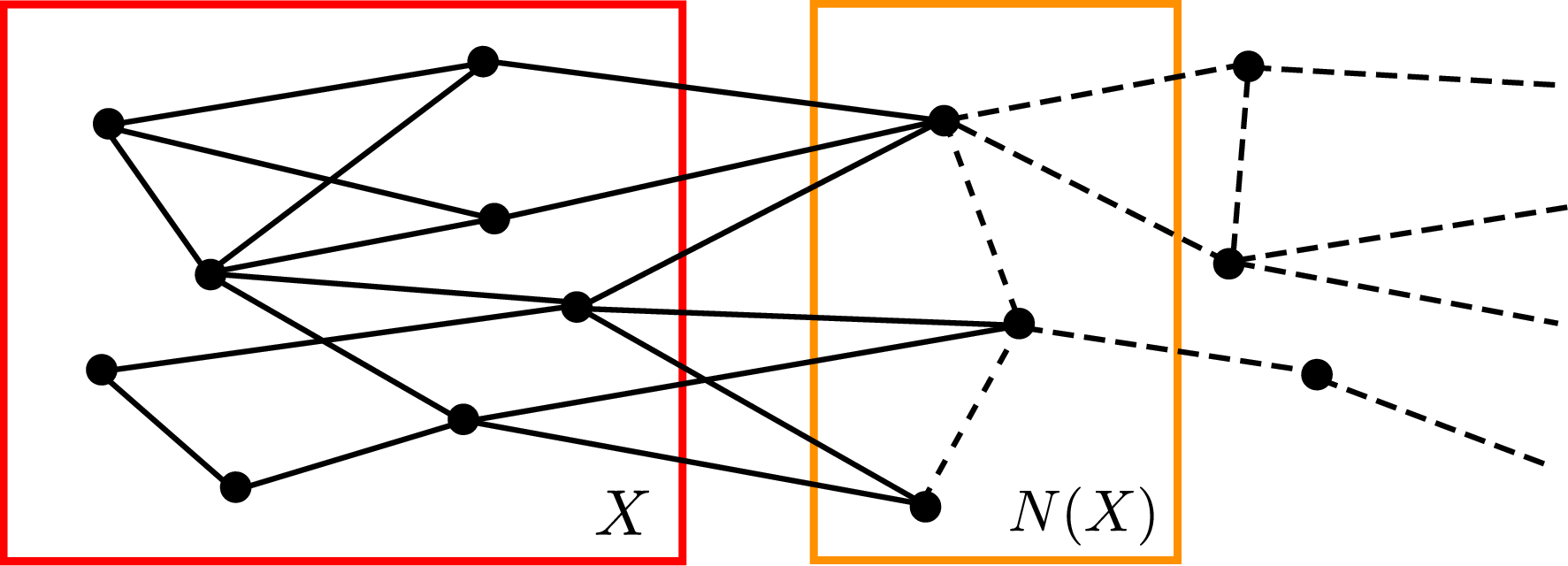}
    \caption{The neighborhood graph of $X$ includes the edges with at least one endpoint in $X$.}
    \label{fig: Sx}
\end{figure}

When we consider a graph $H$ obtained from $G$ by a switching operation in the subsequent arguments, then we always also equip $H$ with the vertex ordering associated to the switching method that is referred to in \Cref{def: SwitchingOfSizeM}. 
We regard the vertex to be the same.
In particular, given a set of vertices $S \subseteq V$ the induced subgraphs on this set, denoted $G[S]$ and $H[S]$, are defined for both $G$ and $H$.

If a graph admits a $Q$-switching, then every subgraph that contains the neighborhood graph of the switching set also admits a $Q$-switching: 

\begin{lemma}\label{lem:subgraphcospectral}
Consider graphs $G = (V,E)$ and $H$ with $H$ obtained from $G$ by a $Q$-switching with switching set $X$. 
If $Y \subseteq V\setminus X$ and $F$ is a set of edges of $G[X\cup Y]$ with $F \cap E(G_{X}) = \emptyset$, then the $Q$-switching can be applied to $G[X \cup Y]-F$ with the cospectral graph $H[X \cup Y] - F$ as a result.
\end{lemma}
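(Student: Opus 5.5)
The plan is a block-matrix computation. Order the vertices of $G$ as $X$, then $Y$, then $Z \de V\setminus(X\cup Y)$. Write $A_G$ in block form with blocks $A_{XX}$, $A_{XY}$, $A_{XZ}$, $A_{YY}$, $A_{YZ}$, $A_{ZZ}$. By the definition of a $Q$-switching with switching set $X$, we have $A_H = (Q\oplus I)^{\T} A_G (Q\oplus I)$, where $Q$ acts on the $X$-coordinates and the identity acts on $Y\cup Z$. Conjugation by a block-diagonal matrix gives, block by block,
\begin{equation}
(A_H)_{XX} = Q^{\T}A_{XX}Q,\qquad (A_H)_{XY} = Q^{\T}A_{XY},\qquad (A_H)_{XZ} = Q^{\T}A_{XZ},
\end{equation}
and the blocks indexed by $(Y\cup Z)\times(Y\cup Z)$ are unchanged. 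In particular $H[X\cup Y]$ has adjacency matrix equal to the principal submatrix of $A_H$ on $X\cup Y$, and this is $(Q\oplus I_Y)^{\T}A_{G[X\cup Y]}(Q\oplus I_Y)$. This is because a principal submatrix on $X\cup Y$ of a conjugation by a matrix that is block diagonal with respect to $X\cup Y$ versus $Z$ equals the conjugation of the principal submatrix. So the $Q$-switching applies to $G[X\cup Y]$ and yields $H[X\cup Y]$.

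Next, I handle the removal of $F$. Since $F\cap E(G_X)=\emptyset$, every edge of $F$ has both endpoints in $Y$. Let $B_F$ be the symmetric $0/1$ matrix supported on the $Y\times Y$ block that encodes $F$. Then $A_{G[X\cup Y]-F} = A_{G[X\cup Y]} - B_F$. Because $B_F$ vanishes outside the $Y\times Y$ block, where $Q\oplus I_Y$ acts as the identity, we get $(Q\oplus I_Y)^{\T}B_F(Q\oplus I_Y) = B_F$. Subtracting, by linearity,
\begin{equation}
(Q\oplus I_Y)^{\T}A_{G[X\cup Y]-F}(Q\oplus I_Y) = A_{H[X\cup Y]} - B_F.
\end{equation}
We also need $F$ to be a set of edges of $H[X\cup Y]$, so that the right-hand side is the adjacency matrix of $H[X\cup Y]-F$. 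This holds because the $Y\times Y$ block of $A_H$ coincides with that of $A_G$, so the edges inside $Y$ are the same in $G$ and $H$.

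Finally, $Q$ is still orthogonal and not a permutation, so this is a genuine $Q$-switching in the sense used above. The two resulting graphs are therefore cospectral. The only real point requiring care is the bookkeeping that edges outside $E(G_X)$ lie entirely in the untouched block. I do not expect any substantive obstacle.
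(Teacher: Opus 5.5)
Your block-matrix computation is the same as the paper's: restrict a block-diagonal conjugation to the principal submatrix on $X\cup Y$, then note that $F$ sits in a block the switching does not touch. However, you only treat the case where the conjugating matrix is the identity on all of $V\setminus X$, and that is not the general situation in the paper.

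In the paper, $Q$ is the non-integral part of a larger orthogonal matrix whose integral rows have the form $\pm e_j^{T}$. So, up to reordering, the switching is conjugation by $\operatorname{diag}(Q,-I_S,I_T)$ with $V\setminus X=S\sqcup T$, and the paper's proof explicitly carries this $-I$ block. The sign is not cosmetic. The $X\times S$ block of $A_H$ is $-Q^{T}A_G[X,S]$, so replacing $-I_S$ by $I_S$ would give entries $-1$ whenever $S$ has neighbours in $X$.

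In the signed case, two of your steps are false as written:
\begin{itemize}
\item The $(Y\cup Z)\times(Y\cup Z)$ blocks are not automatically unchanged, because the $S\times T$ block of $A_H$ is $-A_G[S,T]$.
\item The restricted conjugating matrix is $\operatorname{diag}(Q,-I_{S\cap Y},I_{T\cap Y})$, not $Q\oplus I_Y$. It negates every entry of $B_F$ corresponding to an edge between $S$ and $T$, so the identity $(Q\oplus I_Y)^{T}B_F(Q\oplus I_Y)=B_F$ no longer applies.
\end{itemize}

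The missing observation is that $A_H$ is a $0/1$ matrix. Hence $-A_G[S,T]\geq 0$, which forces $A_G[S,T]=0$: $G$ has no edges between $S$ and $T$. With this in hand:
\begin{itemize}
\item The off-$X$ blocks of $A_G$ and $A_H$ agree, so $F\subseteq E(H)$.
\item $B_F$ is supported on $(S\cap Y)^2\cup(T\cap Y)^2$, where the signs cancel under conjugation, so $B_F$ is fixed.
\item Your principal-submatrix step needs only block-diagonality with respect to $X\cup Y$ versus $Z$, so it goes through unchanged.
\end{itemize}
This yields exactly the paper's conclusion, with the switching of $G[X\cup Y]-F$ realized by $\operatorname{diag}(Q,-I_a,I_b)$.
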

\begin{proof}
Up to reordering the vertices the $Q$-switching is represented by a block diagonal matrix with diagonal blocks $Q$, $-I$, and $I$. 
Consider the block matrix decomposition of the adjacency matrix corresponding to this ordering of the vertices:
\begin{equation}
    A(G) \ed \begin{pmatrix}  B & U & W\\ U^T & C_{11} & C_{12}\\ W^T  & C_{21} & C_{22}\end{pmatrix}.\label{eq:HugeVolt}
\end{equation}
Then, the $Q$-switching applied to $G$ can be represented as the conjugation,
\begin{equation}
    \begin{pmatrix} Q & O & O\\ O & -I & O \\ O & O & I  \end{pmatrix}^T\begin{pmatrix} B & U & W\\ U^T & C_{11} & C_{12}\\ W^T  & C_{21} & C_{22}\end{pmatrix}\begin{pmatrix} Q & O & O\\ O & -I & O \\ O & O & I  \end{pmatrix} =  \begin{pmatrix} Q^TBQ & -Q^TU& Q^TW\\ -U^TQ & C_{11} & -C_{12}\\ W^TQ  & -C_{21} & C_{22}  \end{pmatrix} = A(H).\label{eq:TallHotel}
\end{equation}

The adjacency matrix of $G[X\cup Y]$ is obtained from \eqref{eq:HugeVolt} by removing all rows and columns that are not associated with vertices from $X\cup Y$.
Thus, for example, in the first block of rows in \eqref{eq:HugeVolt} we retain the first block $B$ completely as this corresponds to the switching set $X$, and retain only those columns of $U$ and $W$ that are associated to $Y$.   
Moreover, removing the edges in $F$ only changes entries in the $C$-submatrices from a one to a zero.
Thus, 
\begin{equation}
    A\Bigl(G[X\cup Y] - F\Bigr) = \begin{pmatrix}  
    B & \tilde{U}_{Y} & \tilde{W}_Y\\ 
    \tilde{U}^T_Y & \tilde{C}_{11} & \tilde{C}_{1,2}\\ 
    \tilde{W}^T_Y  & \tilde{C}_{2,1} & \tilde{C}_{22}\end{pmatrix}, \label{eq:GrumpyEgg}
\end{equation}
with $\tilde{U}_Y$ and $\tilde{W}_Y$ the matrices found from $U$ and $W$ by removing the appropriate columns, and $\tilde{C}_{i,j}$ found from $C_{i,j}$ by removing rows and columns and potentially zeroing out certain entries.

Now consider the orthogonal matrix found from $\operatorname{diag}(Q,-I,I)$ by only retaining the rows and columns associated to $X \cup Y$. 
This matrix has the form $\operatorname{diag}(Q, -I_a, I_b)$ with $a+b = \#Y$.
It then follows from \eqref{eq:TallHotel} that the adjacency matrix of $H[X \cup Y] - F$ is precisely the matrix found by conjugating \eqref{eq:GrumpyEgg} with $\operatorname{diag}(Q, -I_a, I_b)$. 
Thus, $G[X\cup Y] - F$ and $H[X\cup Y]-F$ are indeed related by a $Q$-switching. 
\end{proof}

\subsection{Disjoint union of cyles results in isomorphism}\label{sec: Cycles}
The difficult case in the proof of Theorem~\ref{thm:noswitching2core} will occur when $G_X$ is a disjoint union of cycles.
This challenge cannot be ignored since multiple small cycles far apart from each other can appear with nonzero probability in the $2$-core.
Our goal here is to show that in this case the switching always results in an isomorphic graph, which we accomplish in \Cref{lem:switchingcyclesiso}. 

\subsubsection{Generating cospectrality for graphs with maximum degree 2}\label{sec: GenCospec}

We will analyze the subgraphs of $G_X$, which are graphs with maximum degree at most two when $G_X$ is a disjoint union of cycles. 
\Cref{thm:pathscycles} below shows that all cospectral graphs among these are generated by a small set of cospectral pairs in the following sense:

\begin{definition}\label{def: Generated}
 Let $\mathcal{F} = \{(F_i, F_i') : i \in I\}$ be a set of cospectral pairs. The cospectrality between two graphs $G$ and $H$ is \emph{generated} by $\mathcal{F}$ if there is a multisubset $J$ of $I$ such that $G \oplus_{j \in J} F_j \cong H \oplus_{j \in J} F'_j$. 
 Here, $\oplus$ indicates disjoint union.
\end{definition}

The specific problem of generating all cospectral graphs that are subgraphs of unions of cycles has been analyzed by Cvetkovi\'c, Simi\'c and Stani\'c \cite{cvetkovic2010pathsandcycles}. 
However, the stated result there only identified the `minimal non-DS graphs'. 
A priori this does not exclude the existence of a pair of cospectral graphs, both unions of paths and cycles, that is not included in the list in \cite[Theorem 3.1]{cvetkovic2010pathsandcycles} because neither graph is minimal non-DS.
A claim without minimality however follows readily from an older result of \cite{Cvetkovic1975} concerning the generation of all graphs with spectral radius at most two: 

\begin{theorem}[Cvetkovi\'c and Gutman \cite{Cvetkovic1975}]\label{thm:pathscycles}
    Let $\Gamma,\Gamma'$ be a pair of cospectral graphs with maximum degree bounded by $2$. Then the cospectrality is generated by $\{(C_{2n} + 2P_1, C_{4} + 2P_{n-1}) : n \geq 3\} \cup \{(C_{4} + 2P_{n-1}, C_{2n} + 2P_1) : n \geq 3\} $
\end{theorem}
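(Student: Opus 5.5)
The plan is to turn cospectrality into linear algebra over $\bbZ$. A graph with maximum degree at most $2$ is a disjoint union of paths $P_k$ (on $k$ vertices) and cycles $C_k$, $k\geq 3$. Its characteristic polynomial is the product of those of its components. Let $\Psi_d$ be the minimal polynomial of $2\cos(2\pi/d)$, so that $\Psi_1=x-2$ and $\Psi_2=x+2$. Since the roots $2\cos(2\pi j/d)$ with $\gcd(j,d)=1$ are all conjugate, the following factorizations hold:
\begin{equation*}
\varphi(P_k)=\prod_{d\mid 2k+2,\ d>2}\Psi_d ,\qquad \varphi(C_k)=\Psi_1\,\Psi_2^{[k \textnormal{ even}]}\prod_{d\mid k,\ d>2}\Psi_d^{2}.
\end{equation*}
These follow from the eigenvalues $2\cos(\pi j/(k+1))$, $1\leq j\leq k$, and $2\cos(2\pi j/k)$, $0\leq j<k$, respectively.

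Let $\Phi:\bbZ^{\{P_k\}\cup\{C_k\}}\to\bbZ^{\{\Psi_d\}}$ be the homomorphism sending a formal combination of components to its exponent vector. Two graphs are cospectral exactly when their component-count vectors $v_\Gamma,v_{\Gamma'}$ have the same image. Moreover, the cospectrality is generated by the stated family as soon as $v_\Gamma-v_{\Gamma'}=\sum_{n\geq3}a_n r_n$ with $a_n\in\bbZ$, where $r_n\de (C_{2n}+2P_1)-(C_4+2P_{n-1})$. Indeed, one then takes $J$ to contain $|a_n|$ copies of the pair $(C_{2n}+2P_1,C_4+2P_{n-1})$ or of the reversed pair, according to the sign of $a_n$; adding these to both sides makes the component multisets equal. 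This is why both orientations appear in the theorem. So the theorem reduces to showing that $\ker\Phi$ is spanned by the $r_n$.

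First I would check that $r_n\in\ker\Phi$ using the identity $\varphi(C_{2n})\,\varphi(P_1)^2=\varphi(C_4)\,\varphi(P_{n-1})^2$. This follows by writing $x=z+z^{-1}$ and $u_m=z^m-z^{-m}$, so that $\varphi(P_{m-1})=u_m/u_1$. Then $\varphi(C_{2n})=u_n^2=(x^2-4)\varphi(P_{n-1})^2$, while $\varphi(C_4)=x^2(x^2-4)$ and $\varphi(P_1)=x$. Next, given $v\in\ker\Phi$, subtract the multiple of $r_n$ equal to the coefficient of $C_{2n}$ in $v$ for each $n\geq3$. This produces $v'\in\ker\Phi$ supported on paths, odd cycles and $C_4$.

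It then remains to show that $\Phi$ is injective on the span of $\{P_k\}_{k\geq1}\cup\{C_k\}_{k \textnormal{ odd}}\cup\{C_4\}$. I expect this to be the main obstacle, because a naive "leading divisor" triangularity argument fails: $P_1$ and $C_4$ both have $\Psi_4$ as their largest factor. I would proceed as follows.
\begin{enumerate}
\item The $\Psi_2$-coordinate of $\Phi(v')$ equals the coefficient of $C_4$ alone, since odd cycles and paths contribute no factor $\Psi_2$. Hence that coefficient vanishes.
\item The $\Psi_1$-coordinate is then the total coefficient of the odd cycles, which gives the relevant count of odd cycles.
\item Order the remaining basis elements by their largest index $D$: $D=2k+2$ (even) for $P_k$ and $D=k$ (odd) for $C_k$. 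These values are pairwise distinct, and $\Psi_D$ appears in no other basis element with a smaller largest index. Peeling off the largest $D$ repeatedly therefore shows that all coefficients are zero.
\end{enumerate}
This gives $v'=0$, so $v$ lies in the span of the $r_n$, which completes the argument.
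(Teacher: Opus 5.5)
Your proof is correct, and it takes a genuinely different route from the paper.

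\textbf{The paper's route.} The paper computes nothing itself. It imports the complete list (2a)--(2i) from Cvetkovi\'c and Gutman, which generates all cospectralities among graphs of spectral radius at most $2$. That list also involves the Smith graphs, whose components have vertices of degree $\geq 3$. The paper then observes that each such component occurs in only one relation, so any pair in $J$ containing one must be cancelled by its reverse. Only relation (2c) survives.

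\textbf{Your route.} You reprove the relevant part of that classification from scratch.
\begin{itemize}
\item Unique factorization of $\varphi(P_k)$ and $\varphi(C_k)$ into the irreducibles $\Psi_d$ turns cospectrality into equality of exponent vectors.
\item The Chebyshev identity $\varphi(C_{2n})=(x^2-4)\varphi(P_{n-1})^2$ places each $r_n$ in $\ker\Phi$.
\item After eliminating the even cycles $C_{2n}$ with $n\ge 3$, the $\Psi_2$-coordinate, i.e.\ the multiplicity of the eigenvalue $-2$, forces the coefficient of $C_4$ to vanish.
\item Triangularity in the largest index then kills all remaining coefficients. This works because paths have even $D=2k+2$ and odd cycles have odd $D=k$, so the indices are pairwise distinct.
\end{itemize}

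\textbf{Comparison.} Your argument is self-contained and explains why exactly the relations $C_{2n}+2P_1\sim C_4+2P_{n-1}$ appear. It does not require inspecting the external list or checking that the relations involving Smith graphs cancel. The paper's route is much shorter, but it rests entirely on the cited generating set.

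\textbf{Two small remarks.}
\begin{itemize}
\item Your step 2 is not needed. Every remaining basis element has $D\ge 3$, so the peeling argument never uses $\Psi_1$.
\item You should state explicitly that $\Psi_d\neq\Psi_{d'}$ for $d\neq d'$. Their root sets $\{2\cos(2\pi j/d):\gcd(j,d)=1\}$ are disjoint because $\cos$ is injective on $[0,\pi]$. Both the equivalence ``cospectral if and only if equal image under $\Phi$'' and the peeling step depend on this.
\end{itemize}
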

\begin{proof}
    The spectral radius is bounded by the maximum degree, so the graphs we consider have spectral radius at most $2$.  
    Equations (2a)-(2i) in \cite{Cvetkovic1975} gave a set of pairs of graphs $\mathcal{F}$ that generate all cospectralities among graphs with spectral radius at most $2$. Hence there are some $(F_j,F_j')_{j \in J} \in \mathcal{F}$ such that $\Gamma \oplus_{j \in J} F_j \cong \Gamma' \oplus_{j \in J} F_j'$. Each component that is not a cycle or path only appears in one equation among (2a)-(2i). 
    Hence, since $\Gamma,\Gamma'$ have maximum degree bounded by $2$, it holds for any pair $(F_j, F_j')$ with $j \in J$ including such a component that there must be a $k \in J$ such that $(F_k, F_k') = (F_j',F_j)$. We could remove $j,k$ from $J$ and still have a valid isomorphism.     
    Only Equation (2c) in \cite{Cvetkovic1975}, which involves $F_j \de C_{2n}+2P_1$ and $F_j'\de  C_4+2P_{n-1}$, does not involve any components with maximum degree $\geq 3$.  
    We conclude that this pair must generate all pairs of cospectral graphs with maximum degree bounded by $2$.
\end{proof}

\pagebreak[3]
It follows that cospectral mates with maximum degree $2$ differ by at least three components:
\begin{lemma}\label{lem: CospectralMaximumDegree2}
Let $\Gamma, \Gamma'$ be non-isomorphic cospectral graphs both with maximum degree bounded by $2$. 
Then, there exist $n,k\geq 2$ with $n\neq k$ such that $\Gamma$ has at least one more $C_{2n}$ and two components $P_{k-1}$ more than $\Gamma'$ and $\Gamma'$ has a $C_{2k}$ and two $2P_{n-1}$ components more than $\Gamma$.
\end{lemma}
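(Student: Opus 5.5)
We derive this from \Cref{thm:pathscycles} by a counting argument on the multiset of components.

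\emph{Step 1: set up the counting.} By \Cref{thm:pathscycles} there is a multiset $J$ of generating pairs such that $\Gamma \oplus_{j\in J} F_j \cong \Gamma' \oplus_{j\in J} F_j'$. We may cancel any pair $(C_{2n}+2P_1, C_4+2P_{n-1})$ against its reverse $(C_4+2P_{n-1}, C_{2n}+2P_1)$, since both sides then gain the same graph. For each $n\geq 3$, let $a_n\in\mathbb{Z}$ be the net number of times the pair $(C_{2n}+2P_1, C_4+2P_{n-1})$ is used, with reverses counted negatively. Since disjoint unions decompose uniquely into components, the isomorphism reads as an identity of component multiplicities. Writing $\mu_\Gamma(K)$ for the number of components of $\Gamma$ isomorphic to $K$, we get for every connected $K$:
\begin{equation}
\mu_\Gamma(K)-\mu_{\Gamma'}(K) = \sum_{n\geq 3} a_n\bigl(\mu_{C_4+2P_{n-1}}(K)-\mu_{C_{2n}+2P_1}(K)\bigr).\nonumber
\end{equation}
Because $\Gamma\not\cong\Gamma'$, not all $a_n$ vanish.

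\emph{Step 2: read off the differences.} For $n\geq 3$ the cycle $C_{2n}$ appears only in the $n$-th pair, so $\mu_\Gamma(C_{2n})-\mu_{\Gamma'}(C_{2n})=-a_n$. Similarly, $P_{n-1}$ with $n\geq 3$ (that is, $P_m$ with $m\geq 2$) appears only in the $n$-th pair, so $\mu_\Gamma(P_{n-1})-\mu_{\Gamma'}(P_{n-1}) = 2a_n$. The remaining components involved satisfy $\mu_\Gamma(C_4)-\mu_{\Gamma'}(C_4)=\sum a_n$ and $\mu_\Gamma(P_1)-\mu_{\Gamma'}(P_1) = -2\sum a_n$.

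\emph{Step 3: choose $n$ and $k$.} Two cases arise.

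If some $a_k>0$ and some $a_n<0$, take these $n\neq k$, both $\geq 3$. Then $\Gamma$ has $|a_n|\geq 1$ more copies of $C_{2n}$ and $2a_k\geq 2$ more copies of $P_{k-1}$. Symmetrically, $\Gamma'$ has at least one more $C_{2k}$ and at least two more $P_{n-1}$.

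If all nonzero $a_n$ have the same sign, say all are negative (the positive case follows by swapping $\Gamma$ and $\Gamma'$), pick $n\geq 3$ with $a_n<0$ and set $k=2$. Since $C_{2k}=C_4$ and $P_{k-1}=P_1$, we use the aggregate counts. $\Gamma$ has $-2\sum a_m\geq 2$ more copies of $P_1$ and at least one more $C_{2n}$. $\Gamma'$ has $-\sum a_m\geq 1$ more copies of $C_4$ and $-2a_n\geq 2$ more copies of $P_{n-1}$. Since $n\geq 3$, we have $n\neq k$.

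The main delicate point is the case $k=2$. There $C_4$ and $P_1$ receive contributions from every pair, so one needs the sign coherence of the $a_n$ to prevent cancellation. Splitting into the mixed-sign and same-sign cases handles exactly this. The statement's ``$2P_{n-1}$ components'' is read as two copies of $P_{n-1}$.
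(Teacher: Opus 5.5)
Your proof is correct and is essentially the paper's argument: both deduce the lemma from the Cvetkovi\'c--Gutman generating pairs $(C_{2n}+2P_1,\, C_4+2P_{n-1})$ by comparing component multiplicities. The only difference is bookkeeping. The paper packages this as the invariant $2a_{2i}+b_{i-1}=2a'_{2i}+b'_{i-1}$ plus a pigeonhole on the even cycles, which treats $C_4$ and $P_1$ uniformly. You instead track signed pair multiplicities explicitly, which is why you need the mixed-sign/same-sign case split.
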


\begin{proof}
    Let $\Gamma = \sum_{i = 1}^s a_iC_i + \sum_{j = 1}^s b_jP_j$ and $\Gamma' = \sum_{i = 1}^s a_i'C_i + \sum_{j = 1}^s b_j'P_j$ for some large enough $s$. 
    We claim that it then follows that
    \begin{equation}\label{eq:ndspathscycles}
        2a_{2i}+b_{i-1} = 2a_{2i}'+b_{i-1}' \text{ for all } 1 \leq i \leq s +1.  
    \end{equation} 
    Indeed, \eqref{eq:ndspathscycles} holds true if $\Gamma = C_{2n} + 2P_1$ and $\Gamma' = C_{4} + 2P_{n-1}$, and the general case then follows from \Cref{thm:pathscycles} by considering unions; recall \Cref{def: Generated}. 
    
     The graphs $\Gamma$ and $\Gamma'$ are cospectral with the same number of cycle components but not isomorphic. 
     \Cref{thm:pathscycles} implies that they cannot differ in the odd cycles and \eqref{eq:ndspathscycles} implies that they cannot only differ in the paths, so there is an $n \geq 2$ such that $a_{2n} > a_{2n}'$ and a $k \neq n$ such that $a_{2k} < a_{2k}'$. From \eqref{eq:ndspathscycles} for $i = n,k$ we get that there is $n,k$ such that the lemma holds.
\end{proof}

\begin{lemma}\label{lem:addingtounionofpaths}
    Let $G$ and $H$ be cospectral graphs with maximum degree $2$.
    Assume that there exists vertices $x\in G$ and $y\in H$ such that the subgraphs $G\setminus x$ and $H \setminus y$ are isomorphic disjoint unions of paths. 
    Then, $G$ and $H$ are isomorphic.
\end{lemma}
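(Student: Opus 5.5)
The plan is to use the rigid structure of $P \de G\setminus x \cong H\setminus y$, a disjoint union of paths, and reduce everything to a comparison of component multisets. Then \Cref{lem: CospectralMaximumDegree2} rules out non-isomorphic outcomes.

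First I will pin down $\deg_G(x)$ and $\deg_H(y)$. Cospectral graphs have the same number of edges, since $\operatorname{tr}(A^2)=2|E|$. Because $G\setminus x\cong H\setminus y$, this forces $\deg_G(x)=\deg_H(y)\ed d\in\{0,1,2\}$. Since $G$ has maximum degree $2$, the neighbors of $x$ in $P$ have degree at most $1$ in $P$. So they are endpoints of path components (or isolated vertices $P_1$, which I treat as paths whose unique vertex is both endpoints). The same holds for $y$ in $H$.

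Next I will split on $d$.
\begin{itemize}
\item If $d=0$, then $G\cong P+P_1\cong H$ and there is nothing to prove.
\item If $d=1$, then $G$ and $H$ are both obtained from $P$ by extending one path by a vertex. In particular both are forests, i.e.\ disjoint unions of paths.
\item If $d=2$, the vertex $x$ either joins the two endpoints of one path $P_a$, producing a cycle $C_{a+1}$, or joins endpoints of two distinct paths $P_a,P_b$, producing the path $P_{a+b+1}$. The same two options apply to $y$.
\end{itemize}
In every case $G$ and $H$ each contain at most one cycle component.

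Now I argue by contradiction and assume $G\not\cong H$. By \Cref{lem: CospectralMaximumDegree2}, $G$ has strictly more copies of some $C_{2n}$ than $H$, and $H$ has strictly more copies of some $C_{2k}$ than $G$, with $n\neq k$. Hence both $G$ and $H$ contain a cycle. This already settles the case $d=1$, and the subcases of $d=2$ where at least one of $G,H$ is acyclic. The remaining subcase is
\begin{equation*}
G\cong (P-P_a)+C_{a+1}, \qquad H\cong (P-P_b)+C_{b+1},
\end{equation*}
with $a\neq b$, since otherwise $G\cong H$ immediately. Comparing path components, $G$ has exactly one more copy of $P_b$ than $H$, and fewer copies of $P_a$. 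No path type occurs in $G$ two more times than in $H$. But \Cref{lem: CospectralMaximumDegree2} requires $G$ to have at least two more components $P_{k-1}$ than $H$. This is a contradiction, so $G\cong H$.

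The main obstacle is the bookkeeping at the interface with \Cref{lem: CospectralMaximumDegree2}, namely its degenerate cases. Here $P_1$ counts as a path, and a "cycle" on $a+1\le 2$ vertices cannot occur in a simple graph, so such choices of $a$ must be excluded. One must also check that the lemma's indexing ($P_{k-1}$ paired with $C_{2k}$) is consistent with the path lengths appearing here. As a fallback for the last subcase, I would directly invoke the generated relation \eqref{eq:ndspathscycles} from the proof of that lemma, $2a_{2i}+b_{i-1}=2a'_{2i}+b'_{i-1}$. Applied at $i=(a+1)/2$ and at $i=b+1$, this yields a parity or size contradiction when $a\neq b$.
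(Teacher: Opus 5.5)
Your proof is correct and follows the paper's argument: both graphs have at most one cycle, so \Cref{lem: CospectralMaximumDegree2} forces $G$ and $H$ to each be a single even cycle plus paths, and comparing path components shows they differ by at most one path, contradicting the two extra copies of $P_{k-1}$. The degree matching via $\operatorname{tr}(A^2)$ and the fallback are not needed; in the fallback the second index should be $i=(b+1)/2$, not $i=b+1$.
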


\begin{proof}
    The graphs $G$ and $H$ can be formed from a union of paths by adding a vertex with degree at most $2$, so they can have at most one cycle.
    Assume that $G$ and $H$ are not isomorphic. 
    Then, \Cref{lem: CospectralMaximumDegree2} implies that there exist $n, k \geq 2$ with $n \neq k$ and 
    \begin{equation}
        G = C_{2n}  + F \qquad \textnormal{ and }\qquad H = C_{2k} +   F',\label{eq:LazyPop}
    \end{equation}
    for unions of paths $F$ and $F'$ with the property that $F$ has at least two components $P_{k-1}$ more than $F'$. 
    However, using the assumption that the vertex-deleted graphs are isomorphic unions of paths in \eqref{eq:LazyPop} implies that $P_{2n-1} +  F \cong P_{2k-1}  +F'$. 
    Thus, $F$ and $F'$ can differ by at most one component, a contradiction.  
\end{proof}

\begin{remark}
     \Cref{lem:addingtounionofpaths} can also be rephrased in terms of so-called overgraphs. As such it is similar to \cite[Theorem~16]{farrugia2019overgraphs}, but with the condition of maximum degree $2$ instead of controllability.
\end{remark}

\subsubsection{No non-isomorphic switching if \texorpdfstring{$G_X$}{GX} is a union of cycles}

\begin{lemma}\label{lem:cyclesDS}
    Let $X$ be the switching set of a $Q$-switching in a graph $G$.  
    Suppose that the neighborhood graph $G_{X}$ is a union of cycles and that $G_{X}[X]$ is a union of paths. 
    Then, with $H$ the graph obtained by switching, $G_{X}[X \cup Y]$ is isomorphic to $H_{X}[X \cup Y]$ for every $Y \subseteq N(X)$.
\end{lemma}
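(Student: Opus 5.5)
The plan is to argue by induction on $\#Y$. Throughout, I use \Cref{lem:subgraphcospectral} with $F$ the set of edges of $G[X\cup Y]$ having both endpoints in $Y$. Such edges are not in $E(G_X)$, so $G[X\cup Y]-F = G_X[X\cup Y]$ and $H[X\cup Y]-F = H_X[X\cup Y]$ are cospectral for every $Y\subseteq N(X)$. I also assume that the switching set of the switched graph is still $X$ with $N(X)$ unchanged, so that $H_X$ is defined on the same vertex set. This holds because vertices of $N(X)$ keep a nonzero $X$-row (by the norm argument below), while vertices outside $X\cup N(X)$ have zero $X$-rows in both graphs.

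The first step is to show that $H_X$ is $2$-regular, so that every graph $H_X[X\cup Y]$ has maximum degree at most $2$. I take $Y=N(X)$. Then $H_X$ is cospectral to $G_X$, which is a disjoint union of cycles and hence $2$-regular. Regularity is determined by the spectrum: the average degree $2\#E/\#V$ is spectral, and a graph is $r$-regular if and only if its spectral radius equals its average degree. Therefore $H_X$ is $2$-regular.

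I also need a finer degree statement. For $y\in N(X)$, the vector of adjacencies from $y$ to $X$ in $H$ is $\pm Q^{\T}u$, where $u\in\{0,1\}^X$ is the corresponding vector in $G$. Both are $0/1$ vectors with the same Euclidean norm, so $y$ has the same number of neighbors in $X$ in $G$ and in $H$. Since $G_X[X\cup Y]$ has no edges inside $Y$, and the same holds for $H_X[X\cup Y]$, every $y\in Y$ has equal degree in $G_X[X\cup Y]$ and in $H_X[X\cup Y]$.

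The base case is $Y=\emptyset$. Here $G[X]$ is a union of paths and $H[X]$ is cospectral to it with maximum degree at most $2$. If they were non-isomorphic, \Cref{lem: CospectralMaximumDegree2} would force $G[X]$ to contain a cycle $C_{2n}$, which is impossible. Hence $G[X]\cong H[X]$.

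For the inductive step I write $Y=Y'\cup\{y\}$. By the induction hypothesis, $K\de G_X[X\cup Y']\cong H_X[X\cup Y']$. So $G_X[X\cup Y]$ and $H_X[X\cup Y]$ both arise from $K$ by adding one vertex $y$ of the same degree $d\leq 2$. Since the results have maximum degree $\leq 2$, $y$ can only attach to vertices of degree $<2$ in $K$, so never to a cycle component of $K$. The cycle components of $K$ therefore appear unchanged, and in equal number, in both graphs. Dividing the characteristic polynomials by their common cycle factors, the graphs obtained by deleting these cycles are still cospectral. Deleting $y$ from these reduced graphs yields isomorphic unions of paths, namely the path part of $K$. \Cref{lem:addingtounionofpaths} then shows that the reduced graphs are isomorphic, and adding back the common cycles completes the step.

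The step I expect to be most delicate is the degree control on the $H$ side. The orthogonal mixing inside $X$ means one cannot read off degrees of vertices in $X$ directly. This is why I route the argument through the spectral $2$-regularity of $H_X$ and use the norm-preservation argument only for vertices of $Y$. I would double-check that the identification of the $H$-vertex set with $X\cup N(X)$ is legitimate, and that $y$ indeed cannot attach to cycle components.
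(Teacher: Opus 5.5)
Your proposal is correct and follows the paper's proof: cospectrality of $G_X[X\cup Y]$ and $H_X[X\cup Y]$ via Lemma~\ref{lem:subgraphcospectral}, the base case from Lemma~\ref{lem: CospectralMaximumDegree2}, and an induction that strips off the cycle components (which $y$ cannot touch, by the degree bound) before applying Lemma~\ref{lem:addingtounionofpaths}. Your additions are valid and make explicit what the paper leaves implicit: using that regularity is spectral instead of that a union of cycles is determined by its spectrum, and checking $N_H(X)=N_G(X)$ by norm preservation. The observation that $y$ has equal degree on both sides is harmless but not needed.
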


\begin{proof}
     A union of cycles is determined by its spectrum (\eg by \cite{Cvetkovic1975}). 
     Hence,  since \Cref{lem:subgraphcospectral} implies that $H_X$ is cospectral to $G_X$, it holds that $H_X$ is also a union of cycles and has maximum degree $2$. 
     Further, again by \Cref{lem:subgraphcospectral}, $G_{X}[X \cup Y]$ and $H_{X}[X \cup Y]$ are cospectral for all $Y \subseteq N(X)$. We show by induction on $\#Y$ that $G_{X}[X \cup Y]$ and $H_{X}[X\cup Y]$ are in fact isomorphic.
     
     A union of paths has no cycle components, so among graphs with maximum degree at most two it is determined by its spectrum by Lemma~\ref{lem: CospectralMaximumDegree2}.
     Hence, $G_{X}[X]$ and $H_{X}[X]$ are isomorphic. 
     
     Now consider $k\geq 1$ and suppose that $G_{X}[X \cup Y] \cong H_{X}[X\cup Y]$ for all $Y \subseteq N(X)$ with $\#Y < k$. 
     Pick some $Y \subseteq N(X)$ with $\#Y = k$. 
     Let $y \in Y$ be any element. 
     Then, $G_X[X \cup Y\setminus\{y\}] \cong H_X[X\cup Y\setminus\{y\}]$ by the induction hypothesis. 
     Let $Z$ and $Z'$ be the sets of vertices in $G_X[X \cup Y\setminus\{y\}]$ and $H_X[X \cup Y\setminus\{y\}]$ respectively that are not a part of a cycle component. 
     Then, $G_X[Z]$ and $H_X[Z']$ are obtained by deleting the same cycle components and hence isomorphic unions of paths. 
     Further, $G_X[Z \cup y]$ and $H_X[Z'\cup y]$ are cospectral, because they are obtained from the cospectral graphs $G_{X}[X \cup Y]$ and $H_{X}[X\cup Y]$ by deleting an isomorphic set of cycle components. 
     Lemma~\ref{lem:addingtounionofpaths} now yields that $G_X[Z \cup y] \cong H_X[Z'\cup y]$. 
     As $y$ is not adjacent to any of the cycle components in $G_X[X \cup Y\setminus\{y\}]$ or $H_X[X \cup Y\setminus\{y\}]$, adding the cycle components back now yields that $G_{X}[X \cup Y]\cong H_{X}[X\cup Y]$. This completes the induction.
\end{proof}

\pagebreak[3]
\begin{lemma}\label{lem:switchingcyclesiso}
    Let $X$ be the switching set of $Q$-switching in a connected graph $G$.
    If $G_{X}$ is a disjoint union of cycles, then $Q$-switching creates an isomorphic graph to $G$.
\end{lemma}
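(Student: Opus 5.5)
We first reduce to the setting of \Cref{lem:cyclesDS}. Since $G$ is connected and $G_X$ is a disjoint union of cycles, every $x\in X$ has degree exactly $2$ in $G$ and all its edges lie in $G_X$. If some cycle component of $G_X$ were contained entirely in $X$, then by connectivity $G$ equals that cycle. In that case $H$ is cospectral to a cycle, and cycles are determined by their spectrum, so $H\cong G$. Otherwise each cycle of $G_X$ meets $N(X)$, so $G_X[X]$ is a union of paths. \Cref{lem:cyclesDS} then gives $G_X[X\cup Y]\cong H_X[X\cup Y]$ for every $Y\subseteq N(X)$, and in particular $H_X$ is again a union of cycles by \Cref{lem:subgraphcospectral}.

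We next describe $H$ outside $X$. By \eqref{eq:TallHotel}, the block on $V\setminus X$ of $A(H)$ agrees with that of $A(G)$ up to sign flips between the $-I$ and $I$ parts. Since the entries are $0/1$, those off-diagonal blocks $C_{12}$ between $-I$-vertices and $I$-vertices must vanish. Hence $H[V\setminus X]=G[V\setminus X]$. In particular $H$ is obtained from $G$ by deleting the ``arcs'' through $X$ and reinserting the arcs of $H_X$, both of which hang off the same vertex set $N(X)$.

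The goal is therefore to show that the way arcs of $X$-paths attach to $N(X)$ coincides up to a relabelling that extends to $G[V\setminus X]$. Each cycle component of $G_X$ decomposes into maximal $X$-paths whose two endpoints attach to vertices of $N(X)$; note that consecutive $N(X)$-vertices on the cycle are not adjacent in $G_X$, since edges of $G_X$ must touch $X$. Applying \Cref{lem:cyclesDS} to singletons and pairs $Y=\{u,w\}\subseteq N(X)$ recovers, for each pair of boundary vertices, the multiset of path lengths of $X$-arcs joining $u$ to $w$. Applying it to $Y=\{u\}$ recovers the arcs with both ends at $u$, that is, the cycles through $u$. Indeed, in $G_X[X\cup\{u,w\}]$ the cycle components are exactly the cycles formed by such arcs, and the path components record the remaining arc lengths incident to $u$ or $w$. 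Using this, we construct a bijection $\phi$ of $X$ that maps each $X$-arc of $G$ between $u$ and $w$ to an $X$-arc of $H$ of the same length between the same $u$ and $w$. Extending $\phi$ by the identity on $V\setminus X$ then yields $G\cong H$.

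We expect the main obstacle to be this last step: extracting a fixed-endpoint matching of arcs from isomorphism-type information alone. The isomorphisms from \Cref{lem:cyclesDS} need not fix $N(X)$ pointwise, so equality of length multisets between specified endpoints must be deduced by comparing component counts. Each $u\in N(X)$ has degree at most $2$ in $G_X$, so only one or two arcs meet $u$. Counting path components of $G_X[X\cup\{u\}]$ versus $G_X[X]$ identifies the lengths of the arcs at $u$, and the pair statistics from $G_X[X\cup\{u,w\}]$ then pin down which arcs connect $u$ to $w$. We would handle this by a careful case analysis on whether $\deg_{G_X}(u)$ equals $1$ or $2$.
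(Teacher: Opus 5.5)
Your plan follows the paper's route. You dispose of the case where a cycle of $G_X$ lies inside $X$, apply \Cref{lem:cyclesDS} to sets $Y$ with $\#Y\le 2$, match the path components (arcs) of $G_X[X]$ and $H_X[X]$ by length and attachment, and extend by the identity on $V\setminus X$. Two of your additions are improvements:
\begin{itemize}
\item Noting that the sign flip forces $C_{12}=0$, hence $H[V\setminus X]=G[V\setminus X]$, makes explicit something the paper uses silently.
\item Phrasing the matching as equality of length multisets for each fixed attachment pair $\{u,w\}$ (or singleton $\{u\}$) gives the bijection directly. The paper instead defines $\phi(A)$ arc by arc and then asserts injectivity.
\end{itemize}
The gap is that the step you defer is exactly where the paper's proof does its work, and you only assert it.

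Three things are needed.
\begin{enumerate}
\item The case split is not on degree: $\deg_{G_X}(u)=2$ always. The real dichotomy is whether $u$ closes one arc into a cycle or joins two distinct arcs. Comparing the component multisets of $G_X[X]$ and $G_X[X\cup\{u\}]$ by a largest-element argument shows that $u$ does the same in $H$, with arcs of the same lengths.
\item Numbers of components cannot detect whether $u$ and $w$ lie in a common component of $H_X[X\cup\{u,w\}]$: having no joining arc and having exactly one joining arc both lower the count by two. You need component isomorphism types. The paper observes that $G_X[X\cup\{u,w\}]$ has exactly one more component isomorphic to the component $T$ through $u,w$ than each of $G_X[X]$, $G_X[X\cup\{u\}]$ and $G_X[X\cup\{w\}]$. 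This forces $u$ and $w$ into a common component in $H$ as well.
\item Suppose that in $G$ an arc with $a$ vertices joins $u$ to $w$, and their other arcs have $b$ and $c$ vertices. Knowing that in $H$ the vertex $u$ meets arcs of sizes $\{a,b\}$, $w$ meets arcs of sizes $\{a,c\}$, and $u,w$ are joined, still allows the joining arc to have size $b=c\neq a$. The paper excludes this by comparing vertex counts of the component: $2a+b+2$ versus $a+b+c+2$ when $T$ is a path, and path versus cycle when $T$ is a cycle.
\end{enumerate}
With these supplied, the case of no joining arc follows from the symmetry between $G$ and $H$, and your argument is complete.
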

\begin{proof}
     Let $H$ be the cospectral mate of $G$ created by $Q$-switching on $X$.
     If $G_{X}[X]$ contains a cycle, then this is one of the components of $G_{X}$, say $C$, by the assumption that $G_{X}$ is a union of cycles. 
     However, $C$ is then also a component of $G$ since the neighborhood graph $G_X$ contains all edges leading to vertices in $X$. 
     The graph $G$ is connected, so $C = G$. 
     Then $G$ is isomorphic to $H$, because cycles are determined by spectrum.
     
     Now suppose that that $G_{X}[X]$ is a union of paths. 
     Then, for every $Y \subseteq N(X)$ the graph $G_{X}[X \cup Y]$ is a union of paths and cycles and \Cref{lem:switchingcyclesiso} yields that $G_{X}[X \cup Y] \cong H_{X}[X \cup Y]$. 
     In particular, $G_{X}[X] \cong H_{X}[X]$. 
     Let $P(X)$ denote the set of components of $G_{X}[X]$, which is also the set of components in $H_X[X]$. 
     We claim that it suffices to construct permutations $\phi : P(X) \to P(X)$ with the following properties
    \begin{enumerate}
        \item every path $A\in P(X)$ gets mapped to a path of the same length,
        \item if $y \in N(X)$ has $i$ edges to $A$ in $G$, then $y$ has $i$ edges to $\phi(A)$ in $H$ for $0 \le i \le 2$. 
    \end{enumerate}
    Specifically, we claim that such a $\phi$ induces an isomorphism between $G$ and $H$.

    The first property implies that there are isomorphisms between $A$ and $\phi(A)$ for every path $A\in P(X)$. As $G_X$ and $H_X$ are unions of cycles, the only edges going out of $A$ come from the endpoints of the path. 
    Hence, for any path $A$ the set of neighbors in $G$ has the form $\{y,z \}$, possibly with $y = z$.
    The second property implies that at least one of the isomorphisms\footnote{A path of nontrivial length has automorphism group of order $2$.
    Thus, there are typically two isomorphisms between $A$ and $\phi(A)$, except if $A$ is a point in which case there is only one. } between $A$ and $\phi(A)$, say $\psi_A$, sends the neighbors of $y$ and $z$ in $A \subseteq G$ to their neighbors in $\phi(A)\subseteq H$. 
    The map $\psi : V(G) \to V(H)$ which acts on a path $A \in P(X)$ as $\psi_A$ and fixes all other vertices is then indeed an isomorphism between $G$ and $H$. 
    Indeed, it acts as the identity on $V\setminus X$, the permutation $\phi$ ensures that it acts by an isomorphism on $X$, and the choices of $\psi_A$ ensure that any edges between $X$ and $V\setminus X$ also get mapped to edges. 
    
    We next construct the permutation $\phi$. 
    Let $A \in P(X)$ be a path of length $a$. 
    In $G_{X}$ there are no paths, so $N_G(A) \neq \emptyset$. If $N_G(A) = \{y\}$. 
    Then $G_{X}[X \cup y] \cong H_{X}[X \cup y]$ has one cycle of length $a+1$, so $y$ must also be twice adjacent to a path of length $a$ in $H_{X}$. Define $\phi(A)$ to be this path.

    Now suppose that $N_G(A) = \{y,z\}$. The component, $T$, containing $A$ in $G_X[X\cup \{y,z\}]$ must be a path or cycle, because $G_X[X\cup \{y,z\}]$ is a subgraph of $G_X$, which is a union of cycles. Moreover, by \Cref{def: NeighbourhoodGraph} $y$ and $z$ are not adjacent and must have degree $2$ in $G_X[X\cup \{y,z\}]$. Thus $T$ can be decomposed as $B$\nobreakdash-$y$\nobreakdash-$A$\nobreakdash-$z$\nobreakdash-$C$ with $B = C$ if and only if it is a cycle; see \Cref{fig:siwtchingkeepsmiddlepath}. Note that $G_X[X\cup \{y,z\}]$ has one more component isomorphic to $T$ than $G_X[X]$, $G_X[X\cup y]$ or $G_X[X\cup z]$. 
    The isomorphisms $G_{X}[X\cup Y] \cong H_{X}[X\cup Y]$ imply that the same holds for $H$, so $y$ and $z$ are in the same component of $H_X[X\cup \{y,z\}]$.
    
    Additionally, adding $y$ to $G_X[X]$ removes paths of length $a\de \# A$ and $b \de \#B$, and adds one of length $a+b+1$. 
    It then follows from $G_X[X]\cong H_X[X]$ and $G_X[X \cup y]\cong H_X[X\cup y]$ that $y$ must be adjacent to paths $A_y, B_y$ of length $a$ and $b$ in $H_{X}[X \cup y]$. 
    Similarly, $z$ is adjacent to paths $A_z,C_z$ of length $a$ and $c = \# C$ in $H_X[X \cup z]$. 
    Moreover, since $y$ and $z$ are in the same component of $H_X[X\cup \{y,z \}]$, at least one of these paths must connect $y$ and $z$. 
    We claim moreover that there must be a connecting path of length $a$. 
    Indeed, the only case in which this would not occur is if $A_y \neq A_z$ and $C_z = B_y$ with $b = c \neq a$. 
    Then, the component $T$ of $H_X[X\cup \{y,z \}]$ containing $y,z$ is a path of length $2a + b +2$, which contradicts that it has length $a+b+c+2$ by the decomposition $B$\nobreakdash-$y$\nobreakdash-$A$\nobreakdash-$z$\nobreakdash-$C$ in $G_X[X\cup \{y,z \}]$ with $B\neq C$ when $T$ is a path.  
    
    Hence there is at least one path of length $a$ in $H_X$ that is adjacent to $y$ and $z$. If there is exactly one such path define $\phi(A)$ to be this path. 
    In case there are two such paths, which may happen if $B=C$ and $b = a$, then $y$ and $z$ are part of a cycle  $z$\nobreakdash-$P_a$\nobreakdash-$y$\nobreakdash-$P_a$\nobreakdash-$z$ in $H_{X}$ and hence in $G_{X}$. 
    Let $A$ and $\tilde{A}$ be these $P_a$ paths in $G_{X}$. Map $A$ and $\tilde{A}$ to the two $P_a$ paths in $H_{X}$ adjacent to $y,z$ (either order works). The map we have defined here is injective    and must therefore be a permutation. It has the required properties and hence extends to an isomorphism between $G$ and $H$.
\end{proof}
    \begin{figure}[htb]
        \centering
        \includegraphics[width=0.73\linewidth]{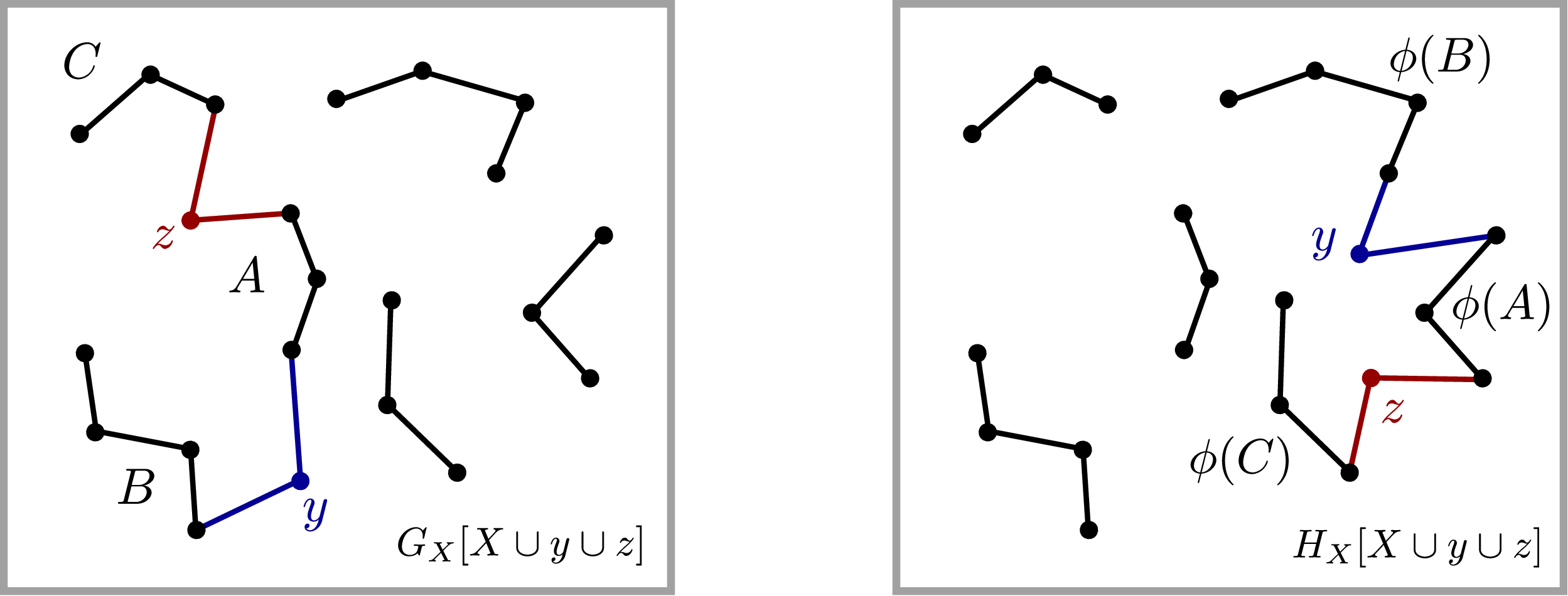}
        \caption{The proof of \Cref{lem:switchingcyclesiso} constructs an isomorphism $G\cong H$ based on a permutation $\phi$ of the paths in $G_X[X]$ and $H_X[X]$. 
        This permutation has the property that for every $y,z \in N(X)$ adjacent to a path $A \subseteq G_X[X]$ in $G$, it holds that the path $\phi(A) \subseteq H_X[X]$ is adjacent to $y,z$ in $H$. }
        \label{fig:siwtchingkeepsmiddlepath}
    \end{figure}

\subsection{Proofs of Theorems \texorpdfstring{\ref{thm:noswitching2core}}{?} and \texorpdfstring{\ref{thm: NoSwitching}}{?}}\label{sec: ProofsSwitching}

\begin{proof}[Proof of Theorem~\ref{thm:noswitching2core}]

 Consider some $k\times k$ orthogonal matrix $Q$ with no integral rows such that the $Q$-switching creates a cospectral mate.
 Let $X$ be the switching set.
 Then $Q^Tv \notin \{0,1\}^k$ for every vector $v \in \{0,1\}^k$ with exactly one entry equal to one, because $Q$ has no integral rows. 
 Thus, any vertex adjacent to a switching set $X$ has at least two neighbours in $X$.
 The assumption that $G$ has minimum degree $\geq 2$ hence implies that the neighborhood graph $G_X$ has again has minimum degree $\geq 2$ since vertices in $X$ keep the same degree, while all vertices in $N(X)$ have at least two neighbours in $X$. 

 Graphs for which all vertices have degree $2$ are disjoint unions of cycles. 
 Since the switching creates a non-isomorphic graph, \Cref{lem:switchingcyclesiso} implies that there must be a vertex $v_0$ in $G_{X}$ with $\deg(v_0) > 2$. 
 Now take a cycle $C_0$ of $G_{X}$ containing $v_0$.
 By definition of $G_{X}$, no pair of vertices outside $X$ are adjacent. 
 Hence, 
 \begin{equation}
    \# (C_0 \cap X) \geq \#(C_0 \setminus X).
 \end{equation}
 We next extend $C_0$ to obtain a graph with at most $2\#X+1$ vertices and more edges than vertices.

 Given that $v_0$ has degree $>2$ in $G_X$, there must be a vertex $v_1$ outside of $C_0$ adjacent to $v_0$. 
 Consider a path $P = v_1v_2,\ldots, v_i$ in $G_X\setminus C_0$ that starts from $v_1$ and can not be extended to a longer path. 
 That the path can not be extended means that all neigbours of $v_i$ in $G_X$ are in $C_0 \cup \{v_1,\ldots,v_{i-1} \}$.  
 Given that $G_X$ has minimal degree $\geq 2$, it must then hold that $v_i$ has at least two neighbors in $C_0 \cup \{v_1,\ldots,v_{i-1} \}$. 
 (See \Cref{fig:placeholder2}.)
 Either way the subgraph of $G_X$ induced by $C_0 \cup \{ v_1,\ldots, v_i\}$ is connected and has at least $\#C_0 + i +1 $ edges since the path $P$ has $i-1$ edges, and we gain two edges from the connections with $v_1$ and $v_i$.

 Further, the path satisfies that $\#(P \cap X)+1 \geq \#(P\setminus X)$ since every vertex outside $X$ must be followed by one in $X$, except for possibly the final one.  
 Hence, the number of vertices in the constructed subgraph satisfies that  
 \begin{equation}
    \#C_0 \cup P = \# (C_0 \cup P) \cap X + \#(C_0 \cup P_1)\setminus X \le 2\#(C_0 \cup P_1) \cap X +1 \le 2\#X+1. 
 \end{equation}
 Thus, $G$ has a connected subgraph on $\leq 2\#X +1$ vertices with strictly more edges than vertices, namely the graph induced by $C_0 \cup P$.
 This concludes the proof.
\end{proof}
 \begin{figure}[htb]
    \centering
    \includegraphics[width=0.36\linewidth]{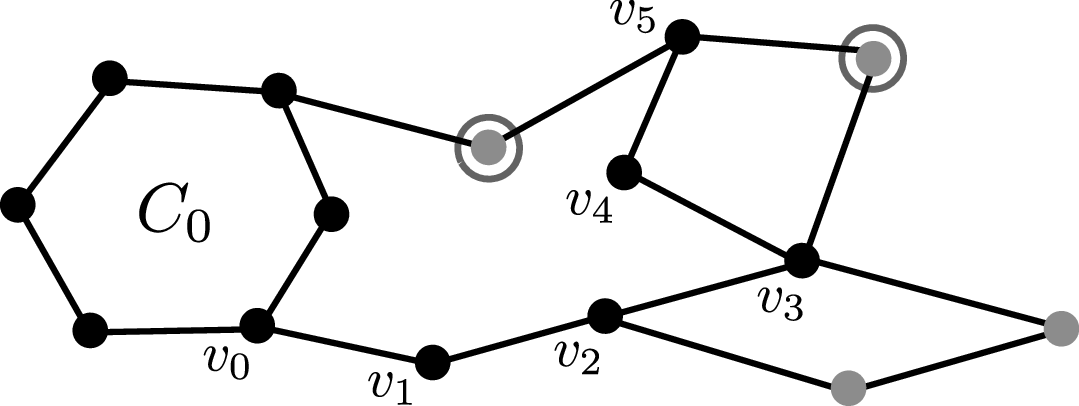}
    \caption{
    The proof of \Cref{thm:noswitching2core} constructs the required subgraph by considering a cycle $C_0 \subseteq G_X$ with a vertex $v_0$ of degree $\geq 3$ and creates a second cycle by considering maximal path in $G_X\setminus C_0$ starting from a neighbour $v_1$ of $v_0$.
    In the depicted example, such a path $v_1v_2\ldots v_6$ can be found by taking $v_6$ to be one of the circled vertices. }
    \label{fig:placeholder2}
\end{figure}

\begin{remark}
    The condition that the graph has minimum degree $2$ is essential in \Cref{thm:noswitching2core}. 
    Even with only one vertex of degree $1$ it is possible for a switching method to create non-isomorphic graphs with only one cycle in the neighborhood graph $G_X$; see \Cref{fig:GM} for example. 
    \begin{figure}[h!]
\centering
\begin{subfigure}{.49\textwidth}
  \centering
  \includegraphics[width=0.6\linewidth]{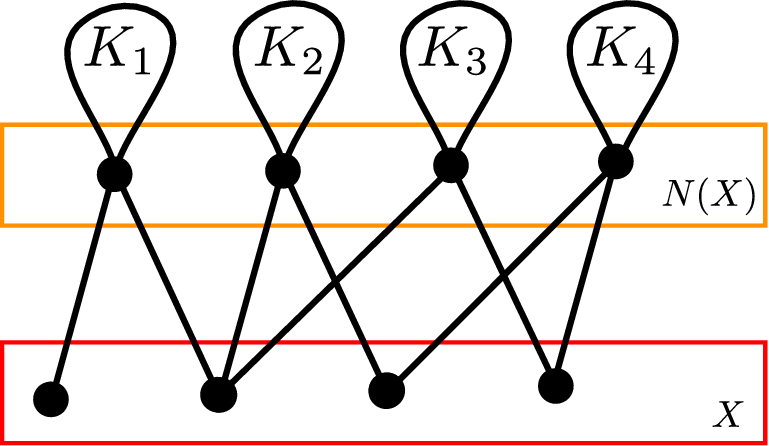}
\end{subfigure}% 
\begin{subfigure}{.49\textwidth}
  \centering
  \includegraphics[width=0.6\linewidth]{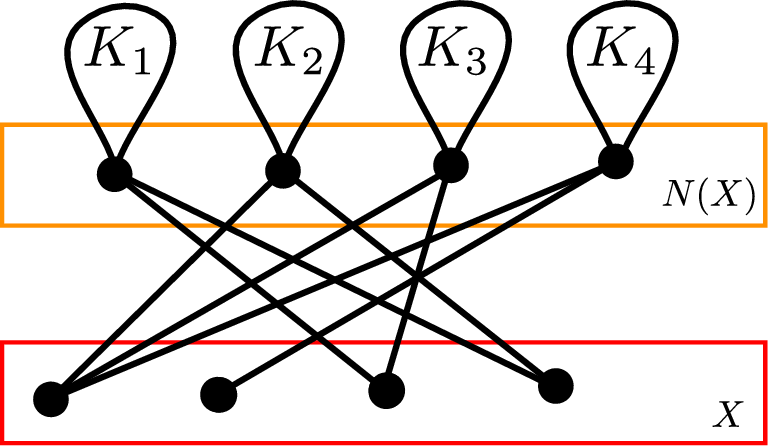}
\end{subfigure}
\caption{
(Left) In this example, there is a single cycle in the neighborhood graph $G_X$ and a single node of degree one. 
Non-isomorphic pending graphs $K_1,\ldots,K_4$ are attached to $N(X)$. 
(Right) Godsil--McKay switching (recall \Cref{ex: GosilMcKay}) yields a non-isomorphic graph, as may be verified by considering the degrees of the neighbors of the unique vertex with a pending $K_4$. }
\label{fig:GM}
\end{figure}
\end{remark}

\begin{proof}[Proof of \Cref{thm: NoSwitching}]

    Recall from the start of \Cref{sec: SwitchingFails} that any switching of size $\leq m$ corresponds to a $Q$-switching for some $k\times k$ orthogonal matrix $Q$ with $k\leq m$.  
    Hence, since the $2$-core of a connected graph is always again a connected graph and has minimum degree $\geq 2$, it follows from  \Cref{thm:noswitching2core} that   
    \begin{align}
        \bbP&\bigl(\operatorname{Core}_2(C_{\textnormal{giant}})\textnormal{ admits a switching of size } \leq m \textnormal{ that yields a non-isomorphic graph}  \bigr)\label{eq:PinkGoose}\\ 
        &\qquad \qquad \qquad \leq \sum_{k\leq 2m+1} \bbP(\operatorname{Core}_2(C_{\textnormal{giant}}) \textnormal{ has a subgraph with }k\textnormal{ vertices and } \geq k+1 \textnormal{ edges}).\nonumber
    \end{align}
    Here, for any graph $H$ on $k$ vertices with $e$ edges, it holds with $G$ the original $G(n,p)$ graph that  
    \begin{equation}
        \bbP\Bigl(H \subseteq G \Bigr) \leq \sum_{v_1,\ldots,v_k = 1}^n \bbP\Bigl(G[v_1,\ldots,v_k] \textnormal{ has at least } e \textnormal{ edges} \Bigr) = O\Bigl(n^k p^e\Bigr). 
    \end{equation}
    Note that $n^{k}p^e = \lambda^e / n^{e-k} $ tends to zero if $\lambda = o(n^{\frac{e-k}{e}})$. 
    In particular, for any fixed $k$, it follows that with high probability the $G(n,p)$ graph (and hence also the $2$-core of its giant component) has no subgraphs with $\leq k$ nodes and $\geq k+1$ edges if $\lambda = o(n^{1/(k+1)})$.
    The assumption that $\lambda = o(n^{1/(2m+2)})$ now yields that \eqref{eq:PinkGoose} tends to zero as $n\to \infty$, concluding the proof . 
    
\end{proof}

\section{Numerical evidence}\label{sec: NumericalEvidence}  

We finally consider numerical evidence for a variant of Conjecture~\ref{conj: 2Core} with adjacency cospectrality replaced by $\bbR$-cospectrality. 
This setting allows the application of algorithms by Wang and Wang that can certify that a given graph is determined by its $\bbR$-spectrum, or search for an $\bbR$-cospectral mate if it is not  \cite{wang2025haemers}. 
Those algorithms are applicable to graphs that are controllable or almost controllable, meaning that the matrix $W \in \bbZ^{n\times n}$ with columns $e, A e,\ldots,A^{n-1}e$ must satisfy $\rank(W)\geq n-1$ where $A$ is the adjacency matrix and $e = (1,\ldots,1)^{\T}$.  

\subsection{Main findings}

We generated random graphs for varying average degree $\lambda > 1$ and number of nodes $n$ and applied the algorithms from \cite{wang2025haemers}, with some additional optimizations discussed below in \Cref{sec: AlgoOptimization}.  
The results are presented in \Cref{tab:2coreexperiments}. 
Our conclusions are as follows: 
\begin{description}[leftmargin = 1.5em]
    \item[On the frequency of $\bbR$-cospectrality]  From the total $100\times 22$ sampled graphs, the giant's $2$-core was only certified to be $\bbR$-cospectral for a total of $6$ samples while many samples were proven to be determined by the $\bbR$-spectrum. 
    The main caveat is that there is a nontrivial fraction of graphs where the algorithm is inconclusive for small $\lambda$. 
    For moderate-sized average degree we see that the algorithm certifies that many graphs are determined by their $\bbR$-spectrum; this was the case for all but one samples for $\lambda = 5$.   
    \item[Non-controllability]
    The primary culprit for the inconclusive cases is that the probability that $\operatorname{rank}(W) < n-1$ is fairly large for small $\lambda$.\footnote{
    This probability is certainly bounded away from zero as $n\to \infty$ for fixed $\lambda$ since $\bbP(\operatorname{rank}(W) < n-1) \geq \mathbb{P}(\operatorname{rank}(A) < n-2)$ and the adjacency matrix of the $2$-core is rank degenerate by $3$ with non-vanishing probability by \cite{glasgow2025exact}.
    This coarse bound proves that the probability of non-controllability is nonzero, but does not determine what the exact probability should be since it can also occur that $\operatorname{rank}(W)< n-1$ when $\operatorname{rank}(A) =n$.} 
    For instance, we find that this probability is on the order of $50\%$ when $\lambda = 1.2$.
    Given the philosophy in random matrix theory that singularity should typically arise from local obstructions \cite{tikhomirov2020singularity,costello2008rank,costello2010rank,addario2014hitting,ferber2023singularity,glasgow2025exact}, and that our \Cref{thm: NoSwitching} shows that local obstructions should not cause cospectrality, it seems plausible that most of the graphs in \Cref{tab:2coreexperiments} with $\operatorname{rank}(W)< n-1$ may also be determined by their $\bbR$-spectrum.
\end{description}
In summary, the data where the algorithms are conclusive gives strong evidence that the $2$-core should typically be determined by its $\bbR$-spectrum when it is controllable or almost controllable, and it seems plausible that the non-controllable cases may also often be determined by $\bbR$-spectrum. 
This supports the variant of \Cref{conj: 2Core}.

\begin{table}[htb]
    \centering
    \begin{tabular*}{\textwidth}{@{\extracolsep{\fill}} ccccccc }
        \multirow{2}{*}{$\lambda$} & 
        \multirow{2}{*}{$n$} & 
        Average & 
        Determined & 
        \multirow{2}{*}{$\bbR$-cospectral} & 
        \multicolumn{2}{c}{\emph{\rule[0.6ex]{1.3cm}{0.4pt}\,Inconclusive\,\rule[0.6ex]{1.3cm}{0.4pt}}} \\ 
         & & $\#V(\textnormal{Core}_2))$ & by $\bbR$-spectrum & & $\operatorname{rank}(W)<n-1$ & Terminated 
        \\\noalign{\vspace{3pt}}\hline\hline\noalign{\vspace{1pt}}
        \multirow{3}{*}{1.2}    & 50 & 4.4 & 54 & 0 & 46 & 0\\
                                & 100& 6.9 & 50 & 0 & 49 & 1\\
                                & 200& 10.6 & 49 & 0 & 51 & 0\\
                                & 500& 26.3 & 35 & 0 & 61 & 4\\
                                & 1000 & 49.3 & 36 & 1 & 61 & 2\\
                                & 2000 & 110.0 & 39 & 0 & 52 & 9\\\hline
        \multirow{3}{*}{1.5}    & 50 & 9.8 & 36 & 0 & 63 & 1\\
                                & 100& 21.2 & 49 & 0 & 49 & 2\\
                                & 200& 42.7 & 50 & 1 & 45 & 4\\
                                & 500& 106.9 & 55 & 0 & 39 & 6\\
                                & 1000& 216.8 & 48 & 0 & 44 & 8\\\hline
        \multirow{3}{*}{2}      & 50 & 23.9 & 67 & 1 & 29 & 3\\
                                & 100& 49.0 & 72 & 0 & 14 & 14\\
                                & 200& 94.9 & 75 & 0 & 18 & 7\\
                                & 500& 233.0 & 70 & 0 & 16 & 14\\\hline
        \multirow{3}{*}{2.5}    & 50 & 31.6 & 88 & 1 & 7 & 4\\
                                & 100& 65.6 & 87 & 0 & 5 & 8\\
                                & 200& 132.2 & 89 & 2 & 3 & 6\\
                                & 500& 329.0 &91 & 0 & 6 & 3\\\hline
        \multirow{3}{*}{5}      & 50 & 48.2 & 100 & 0 & 0 & 0\\
                                & 100& 96.2 & 100 & 0 & 0 & 0\\
                                & 200& 191.8 & 99 & 0 & 0 & 1\\
    \end{tabular*}
    \caption{Is the $2$-core of the giant component of $G(n,\lambda/(n-1))$ determined by $\bbR$-spectrum? Tested using Wang and Wang's algorithm \cite{wang2025haemers} on 100 random graphs per $(n,\lambda)$.}
    \label{tab:2coreexperiments}
\end{table}
\subsection{Algorithmic optimizations}\label{sec: AlgoOptimization}

Our code is largely a translation of the Mathematica code from \cite{wang2025haemers} to Sagemath and can be found at \href{https://github.com/nilsvandeberg/cospectrality_sparse_graphs}{\nolinkurl{https://github.com/nilsvandeberg/cospectrality_spa}}\hfill\break\href{https://github.com/nilsvandeberg/cospectrality_sparse_graphs}{\nolinkurl{rse_graphs}}. 
We made two noteworthy changes to enable computation for larger graphs. 

First, the algorithm requires both the elementary divisors $d_1,\ldots, d_n$ of a modified walk matrix $W^p$ and the invertible matrices $U,V \in \operatorname{GL}_{n}(\bbZ)$ that define the similarity between $W^p$ and its Smith normal form. 
The computation of $V$ is expensive because the entries in the walk matrices become extremely large. 
However, one needs to know $V$ only up to a prime power depending on $d_n$; see \cite[Corollary 3]{wang2025haemers}. 
Hence we need only consider $W^p$ modulo this prime power, which can sharply reduce the values in the walk matrix and hence improve the computation time. 

Second, for almost controllable graphs the algorithm replaces the final column of the walk matrix to find an invertible matrix. 
The implementation in \cite{wang2025haemers} determines this column by computing all cofactors of the original final column of $W$. 
The resulting vector is in the kernel of $W$, see \eg \cite[Proposition~1]{wang2021almost}. 
Hence we can also find a vector in the kernel and scale it by computing only one of the cofactors to get the correct final column. 

Even with these adjustments it is still possible for the computation to overflow or take a very long time. 
We have followed the approach in \cite{wang2025haemers} and terminated the algorithm if certain values became too big. 
The cases where this happened are logged in the `Terminated' column of \Cref{tab:2coreexperiments}.

\subsection*{Acknowledgements}
The authors thank Aida Abiad and Emanuel Juliano for discussions related to the topic of this paper. 

Nils Van de Berg is funded by NWO (Dutch Research Council) through the grant  VI.Vidi.213.085. Alexander Van Werde is funded by the Deutsche Forschungsgemeinschaft (DFG, German Research Foundation) under Germany's Excellence Strategy EXC 2044/2 –390685587, Mathematics M\"unster: Dynamics–Geometry–Structure.

\end{document}